\documentclass[a4paper,11pt,reqno]{article}  
\usepackage{graphicx,color,subfigure,multirow, here}
\usepackage{mathtools}
\usepackage[utf8]{inputenc} 
\usepackage[T1]{fontenc}
\usepackage[english]{babel}
\usepackage{amsmath, amsfonts, amsthm,amssymb,here,dsfont, hyperref}
\usepackage{natbib} 
\usepackage{csquotes} 
\usepackage{algpseudocode}
\usepackage[linesnumbered,ruled,vlined]{algorithm2e}
\usepackage{comment}

\setlength\parindent{0pt}

\newcommand{\one}{\mathds{1}}
\newtheorem{theo}{Theorem}[section]
\newtheorem{definition}[theo]{Definition}

\newtheorem{lem}[theo]{Lemma}
\newtheorem{rem}[theo]{Remark}

\newtheorem{ass}[theo]{Assumption}

\setlength{\columnsep}{5mm}

%%%%%%%%%%%%%%%%%%%%%%%mes commandes%%%%%%%%%%%%%%%%%%%%%%%%%

\newcommand{\E}{\mathbb{E}}
\newcommand{\R}{\mathbb{R}}

\makeatletter

\@addtoreset{equation}{section}
\makeatother
\setlength{\hoffset}{-18pt}         
\setlength{\oddsidemargin}{0pt} % Marge gauche sur pages impaires
\setlength{\evensidemargin}{8pt} % Marge gauche sur pages paires
\setlength{\marginparwidth}{54pt} % Largeur de note dans la marge
\setlength{\textwidth}{484pt} % Largeur de la zone de texte (17cm)
\setlength{\voffset}{-18pt} % Bon pour DOS
\setlength{\marginparsep}{7pt} % Separation de la marge
\setlength{\topmargin}{0pt} % Pas de marge en haut
\setlength{\headheight}{13pt} % Haut de page
\setlength{\headsep}{10pt} % Entre le haut de page et le texte
\setlength{\footskip}{37pt} % Bas de page + separation
\setlength{\textheight}{660pt} % Hauteur de la zone de texte (25cm)$
%%%%%%%%%%%%%%%%%%%%%%%%%%%%%%%%%%%%%%%%%%%%%%%%%%%%%%%%%%%
%%%%%      	 COMMENTs	    		      %%%%%
%%%%%%%%%%%%%%%%%%%%%%%%%%%%%%%%%%%%%%%%%%%%%%%%%%%%%%%%%%%
% ACTIVATE this part when writting

\setlength{\marginparwidth}{1.4cm} % to comment if todonotes is not used
% ACTIVATE this part when writting

\setlength{\tabcolsep}{0.5cm}

\begin{document}
%\title{Computationally tractable algorithm for \\ active learning based on rejection arguments}
\title{Active learning algorithm through the lens of rejection arguments.}
\date{\today}
\author{
Christophe Denis$^{(1)}$, Mohamed Hebiri$^{(1)}$, Boris Ndjia Njike$^{(2)}$, Xavier Siebert$^{(2)}$\\
$^{(1)}$ LAMA, Université Gustave Eiffel, France\\
$^{(2)}$ Mathematics and Operational Research
University of Mons, Belgium\\
%$^{(3)}$
}

\maketitle

\begin{abstract}
Active learning is a paradigm of machine learning which aims at reducing the amount of labeled data needed to train a classifier.
Its overall principle is to sequentially select the most informative data points, which amounts to determining the uncertainty of regions of the input space. 
The main challenge lies in building a procedure that is computationally efficient and that offers appealing theoretical properties; most of the current methods satisfy only one or the other.
In this paper, we use the classification with rejection in a novel way to estimate the uncertain regions.
We provide an active learning algorithm and prove its theoretical benefits under classical assumptions.
In addition to the theoretical results, numerical experiments have been carried out on synthetic and non-synthetic datasets. These experiments provide empirical evidence that the use of rejection arguments in our active learning algorithm is beneficial and allows good performance in various statistical situations.
\end{abstract}

\vspace*{0.25cm} 
\noindent {\bf Keywords}: active learning, rejection, nonparametric learning, classification\\

\section{Introduction}
\label{sec:genFramework}
The aim of machine learning consists in designing learning models that accurately maps  a set of inputs from a space $\mathcal{X}$ called \textit{instance space} to a set of outputs $\mathcal{Y}$ called \textit{label space}. Nowadays, with the data deluge, obtaining a powerful learning model requires a lot of data from $\mathcal{X}$ to be labeled, which is time consuming in many modern applications such as speech recognition or text classification. This motivated the development of other paradigms beyond classical prediction tasks. In this paper, we focus on prediction in the binary classification setting, that is $\mathcal{Y}=\lbrace 0,1\rbrace$.
In this framework, one of the most studied techniques to deal with this specificity is the iterative supervised learning procedure called \textit{active learning} \citep{cohn1994improving,castro2008minimax,balcan2009agnostic,hanneke2011rates, locatelli2017adaptivity, locatelli2018adaptive} that aims at reducing the data labeling effort by carefully selecting which data need to be labeled. The goal of \textit{active learning} is to achieve a high rate of correct predictions while using as few labeled data as possible. One of the key principles of active learning is to identify at each step the region of the instance space where the label requests should be made, called \textit{uncertain region} in this paper, also known as \textit{disagreement region} in the active learning literature \citep{hanneke2007bound, balcan2009agnostic, dasgupta2011two}.  
Many techniques have been developed to this aim, both in parametric \citep{cohn1994improving, hanneke2007bound, balcan2009agnostic, beygelzimer2009importance, hanneke2014theory} and nonparametric setting \citep{minsker2012plug, locatelli2017adaptivity, locatelli2018adaptive}.

In this paper, we are particularly interested in the nonparametric setting, where several computational difficulties have so far hampered the practical implementation of the proposed algorithms.
For example, \citep{minsker2012plug} provides interesting theoretical results which partly motivated \cite{locatelli2017adaptivity, locatelli2018adaptive} as well as the present work, but it fails to provide a computationally efficient way to estimate the uncertain region.

To overcome these shortcomings, we present a new active learning algorithm using the paradigm called \textit{rejection}. The latter typically allows the learning models to evaluate their confidence in each prediction and to possibly abstain from labeling an instance ({\it i.e.}, "reject" this instance) when the confidence in the prediction of its label is too weak. This rejection will however be used in a novel way in this work to conveniently compute the uncertain region, as explained below.

Rejection and active learning typically differ on how they are interested in this uncertain region. In rejection, the interest in the uncertain region appears \textit{after} the design of a learning model, that rejects a test point in order to avoid a misprediction. This is very useful in some applications such as medical diagnosis where a misprediction can be dramatic. However, in active learning, the uncertain region is used \textit{during} the training process to progressively improve the model's performance by requesting labels where the classification is difficult.\\
In our algorithm, we use rejection at each step $k$ of the training process to estimate the uncertain region $A_k\subset \mathcal{X}$ based on the information gathered up to this step.
Then some points are sampled from the region $A_k$ and their labels are requested.
Based on these labeled examples, an estimator $\hat{f}_k$ is provided, that is then used to assess for each $x$ $\in$ $A_k$ the confidence in the prediction. The points where the confidence is low are rejected and are considered to form the next uncertain region $A_{k+1}$, thereby progressively reducing the part of instance space $\mathcal{X}$ on which a model remains to be constructed.
We study the rate of convergence with respect to the excess-risk of our nonparametric active learning algorithm based on histograms under classical smoothness assumptions. It turns out that combining active learning sampling together with rejection allows for optimal rates of convergence. Using numerical experiments on several datasets we also show that our active learning process can be efficiently applied to any off-the-shelf machine learning algorithm. 

The paper is organized as follows : in Section~\ref{sec:background} we provide the background notions of active learning and rejection separately, then review some recent works that proposed to combine these two notions, although in a way that differs from ours. 
Then we describe our algorithm in Section~\ref{Sec:Algo} along with the theoretical guarantees about its rate of convergence. 
Practical considerations to take into account when applying our algorithm are discussed in Section~\ref{sec:practical}. 
Numerical experiments are presented in Section~\ref{sec:experiments} and we conclude the paper along with some perspectives for future work in Section~\ref{sec:conclusion}.
The full proof of our theoretical result is relegated to the Appendix.

\section{Background}
\label{sec:background}
In this Section we review the literature related to active learning in Section~\ref{subsec:active}, and the reject option framework~\ref{subsec:reject}.
Thereafter, in Section~\ref{subsec:active+reject} we provide a review on the use of the rejection in the context of active learning.

\subsection{Active learning}
\label{subsec:active}
Given an {i.i.d.} sample $(X_1,Y_1), \ldots, (X_n,Y_n)$ from an unknown probability distribution $P$  defined on $\mathcal{X}\times\mathcal{Y}$, the classification problem consists in designing a map $g: \mathcal{X}\longrightarrow \mathcal{Y}$ from the instance space to the label space.
%\moh{là il y a une confusion entre construction d'algo couteuse et collect de données couteuse} This is often a time-consuming or expensive task in many machine learning applications, for instance where the volume of unlabeled data from the instance space is huge, whereas the corresponding labels are only available through costly requests to a so-called oracle.
However, building such mapping might become a tricky task in particular situations where the labeling process of input instances are only available through time-consuming or expensive requests to a so-called oracle. In such applications, one might however have access to a huge amount of unlabeled data from the instance space.
%\moh{redit avec le début de l'intro.}
%This motivated the development of other paradigms beyond classical binary classification. One of the most studied techniques is the iterative supervised learning procedure called \textit{active learning} \citep{cohn1994improving} that aims at reducing the data labeling effort by carefully selecting which data to label.
This motivated the use of the \textit{active learning} paradigm \citep{cohn1994improving} that aims at reducing the data labeling effort by carefully selecting which data to label.

Active learning algorithms were initially designed according to somewhat heuristic principles \citep{settles1994active} without theoretical guarantees on the convergence nor on the expected gain with respect to classical "passive" learning. 
The theory of active learning has then gradually developed  \citep{cohn1994improving, freund1997selective, balcan2009agnostic, hanneke2007bound, dasgupta2007general, castro2008minimax, minsker2012plug, hanneke2015minimax, locatelli2018adaptive, locatelli2017adaptivity, kpotufe2022nuances}.

%One of the seminal works, \citep{cohn1994improving} examines the setting where the labels are not corrupted, that is $\eta(X)$ $\in$ $\lbrace 0,1\rbrace$. The work describes...\\ 
We are particularly interested in the nonparametric setting, where regularity and noise assumptions are made on the regression function. Two types of regularity assumptions are made on the regression function. The first one was introduced in the seminal work by \citep{castro2008minimax} and was also used in \citep{locatelli2018adaptive}, where it is assumed that the decision boundary $\lbrace x,\;\eta(x)=\tfrac{1}{2}\rbrace$ (where $\eta$ is the regression function) is the graph of a smooth function. The second one, which was used in \citep{minsker2012plug, locatelli2017adaptivity}, assumes that the whole regression function is smooth.
%\moh{on ne peut pas le donner le sens? More globally?} 
In this work, we will use similar regularity assumption as in \citep{minsker2012plug}. Besides, the noise margin assumption corresponds to the so-called \textit{Tsybakov noise condition}, and it was observed that it corresponds to the situation in which active learning can outperforms passive learning~\citep{castro2008minimax}. 
\\
In this work, we design an efficient active learning algorithm, similar to that considered in \citep{minsker2012plug}, but handling the uncertain region in an explicit and computationally tractable way using rejection.

\subsection{Classification with reject option}
\label{subsec:reject}

In the present contribution, we borrow some techniques from learning with reject option. Indeed, as detailed in Section~\ref{Sec:Algo}, a core component of our active strategy relies on the confidence we have on labels of the input instances. In contrast to the classical statistical learning framework where a label is provided for each observation $x\in \mathcal{X}$, learning with reject option is based on the idea that an observation for which the confidence on the label is not high enough should not be labeled. 
From this perspective, given a prediction function $g:\mathcal{X} \to \mathcal{Y}$, an instance $x\in \mathcal{X}$ can be either classified and the corresponding label is $g(x)$ or rejected and no label is provided for $x$ (according to the literature, the output for $x$ is $\emptyset$ or any symbol as $\oplus$ meaning reject). A classifier with reject option $\tilde{g}$ is then a measurable mapping $\tilde{g}:\mathcal{X} \to \mathcal{Y}\cup \{\oplus\}$.
Reject option has been first introduced in the classification setting in \citep{Chow57}. More recently, and since the development of {\it conformal prediction} in~\citep{Vovk99IntroCP,Vovk_Gammerman_Shafer05}, reject option has become more popular and has been brought up to date to meet the current challenges. The paper by~\citep{Herbei_Wegkamp06} proposed the first statistical analysis of a classifier based on reject option. After these pioneer works, more papers on reject options appeared (e.g., \citep{Naadeem_Zucker_Hanczar10,Grandvalet_Rakotomamonjy_Keshet_Canu09,Yuan_Wegkamp10,Lei14,cortes2016learning,Denis_Hebiri19} and references therein). They mainly differ on the way they take into account the reject option. In particular, we can distinguish three main approaches: i) use the reject option to unsure a predefined level of coverage; ii) use the reject option to unsure a pre-specified proportion of rejected data; iii) consider a loss that balances the coverage and the proportion of rejected data.
It has been established that, while there is no best strategy, controlling the coverage requests more labeled data than controlling the rejection rate, which in turn asks more (unlabeled) data that the last strategy that does the trade-off. On the other hand this last approach does not control any of the two parameters.

Reject option has also been used in different contexts, such as in regression~\citep{Vovk_Gammerman_Shafer05,denis2020regression} or algorithmic fairness~\citep{NicolasEvgeniiUAI21}. These papers show how reject option can be used to efficiently solve issues that are intrinsic to the problem.

%In standard statistical learning (both passive and active), a classification rule is constrained to provide a label to every instance $x\in \mathcal{X}$ , even for those where it is challenging to assign the right label with high confidence.  These constraints can be problematic in some applications as medical diagnosis where a wrong prediction can be fatal. It would be interesting to take into account the reject option by allowing classification rule to abstain from labeling an instance. The reject option in statistical learning has been introduced for the first time in \citep{Chow57}. Many works have also been developed in the same line of research....
%\citep{cortes2016learning}
\subsection{Active learning with reject option}
\label{subsec:active+reject}
Most active learning schemes mentioned in Section~\ref{subsec:active} attempt to find the most "informative" samples in a region close the decision boundary, called \textit{uncertain region} or \textit{disagreement region}. Some recent works have refined this idea by adding an option to abstain from labeling the points ({\it i.e.}, reject) that are considered too close to the decision boundary. \\
Although the intersection of rejection and active learning seems natural, their combination is fairly recent. Current studies can be grouped into two differents settings: the first one is focused on using reject option for improving performance guarantees of some standard active learning algorithms \citep{puchkin2021exponential,zhu2022efficient} and the second one is focused on providing a classifier which takes into account reject option \citep{shekhar2021active, shah2020online}, similarly to the standard reject option setting \citep{Herbei_Wegkamp06,Denis_Hebiri19}.\\
In the first setting, \citep{puchkin2021exponential} considered the parametric framework, particularly the model misspecification. That is, given a class of classifiers $\mathcal{F}$ (which possibly do not contain the Bayes classifier), the aim is to find an estimator $\hat{f}$ which achieves minimum excess error of classification.  By using the reject option,  \citep{puchkin2021exponential} proved that exponential savings in the number of label requests are possible in model misspecification under Massart noise assumption \citep{Massart_Nedelec06}. Their algorithm is related to the disagreement-based approach \citep{hanneke2007bound, balcan2009agnostic} and outputs an improper classifier $\hat{f}$, that is $\hat{f}$ $\notin$ $\mathcal{F}$ possibly. The work of \citep{puchkin2021exponential} was extended by \citep{zhu2022efficient} which provides a more efficient active learning algorithm that overcomes the difficulty of computing the uncertain region.
In \citep{zhu2022efficient}, the authors build a classifier based on the rejection rule with exponential saving in labels, for which they establish risk bounds in a general parametric setting. At each trial, the classifier does not label points for which the doubt is substantial. This decision of abstaining from classifying a point is taken by considering a set of "good" classifiers among a parametric class of functions. In particular, a point is rejected if all "good" classifiers consider it as a difficult point, that is, the corresponding score is within the interval $[1/2 - \gamma , 1/2 + \gamma ]$, where $\gamma$ is a (small) positive real value. 
However an analysis of this algorithm sheds light on three arguments. First, the score at point $x$ should be evaluated for all "good" functions in the class. Second, tuning the parameter $\gamma$ is not discussed and it might be tricky. Finally, the empirical performance of the proposed algorithm is not considered in the paper.\\
In the second setting, \citep{shekhar2021active}, considered the nonparametric framework under some smoothness and margin noise assumptions. The authors designed an active learning algorithm which outputs a classifier that takes into account the reject option in a standard way as in \citep{Denis_Hebiri19} by deciding not to label the instances which are located near to the decision boundary.
In particular, the final outputted algorithm is a classifier with reject option. In their framework, they derived rates of convergence for an excess-risk dedicated to the reject option framework and showed that these rates are better to those obtained by the passive learning counterpart~\citep{Denis_Hebiri19}. 
However it is not obvious in this setting to obtain computationally tractable algorithms, among others because the hypothesis class needs to be restricted. 
In contrast, in the present paper, we focus on the classical active problem and derive rates of convergence for this problem, along with a practical implementation of the algorithm.

%\moh{haha! j'étais en train de me dire la même chose! Est-ce vraiment notre approach qui est considérer ou la version confidence? Algo feasible ?}

%\begin{itemize}
%    \item (include a general literature review here) Some works have exploited the reject framework in active learning  \citep{puchkin2021exponential,shekhar2021active, zhu2022efficient, shah2020online}.
%\end{itemize}

\subsection{Contributions} 
The recent works mentioned in Section~\ref{subsec:active+reject} \citep{puchkin2021exponential, shekhar2021active, zhu2022efficient} provide interesting theoretical contributions showing the interest of combining active learning and reject option. 
However the practical implementation of the related algorithms is not straightforward, notably because it is computationally difficult to estimate the uncertain region.

In this work, we use a peculiar combination of the rejection and active learning to propose an active learning which is easy to compute in practice. More precisely, our contributions are threefold:  
\begin{itemize}
    \item We transform the typical classification with reject option framework (from Sections~\ref{subsec:reject} and~\ref{subsec:active+reject}) to estimate the so-called uncertain region in a novel way.
    Not only does this methodology provide 
    a computationally efficient algorithm for active learning, but it also can be remarkably applied to any off-the-shelf machine learning algorithm.
    This is a twofold major improvement over  \citep{minsker2012plug}. 
    \item Beyond the appealing numerical properties of our procedure, we show that it achieves optimal rates of convergence for the misclassification risk and the active sampling under classical assumptions in this setting. 
    \item We illustrate the benefit of our method in synthetic and real datasets.
\end{itemize}

\section{Active learning algorithm with rejection}
\label{Sec:Algo}
In this section, after introducing some general notations and definitions, we present our algorithm in a somewhat informal way, and then provide the theoretical guarantees  under some classical assumptions. 

\subsection{Notations and definitions}
Throughout this paper $\mathcal{X}$ denotes the instance space and $\mathcal{Y} = \{0,1\}$ is the label space. 
Let $P$ be the joint distribution of $(X,Y)$. We denote by $\Pi$ the marginal probability  over the instance space and by $\eta(x)=P(Y=1\vert X=x)$ the regression function. The performance of a classification rule $g: \mathcal{X} \mapsto \{0,1\}$ is measured through the misclassification risk $R(g) = P\left(g(X) \neq Y\right)$.
With this notation, the Bayes optimal rules that minimises the risk $R$ over all measurable classification rules \citep{lugosi2002pattern} is given by $g^*(x) = \one_{\{\eta(x) \geq 1/2\}}$ and we have: 
$$
R(g^*)=1-\mathbb{E}_{\Pi}(f^*(X)) \enspace ,
$$
where $f^*(\cdot) = \max(\eta(\cdot),1-\eta(\cdot)) $ is called $\textit{score function}.$ For any classification rule $g$, the excess risk is given by
\begin{equation}
\label{eq:excess-risk}
R(g) - R(g^*) = 2\mathbb{E} \left[\left\lvert\eta(X) - \frac{1}{2} \right\rvert \, \one_{ \{ g(X) \neq g^*(X) \} }\right] \enspace .
\end{equation}

In this work, we consider the following active sampling scheme.
For each $A \subset \mathcal{X}$, and $M \geq 1$, we can sample $(X_i,Y_i)_{1 \leq i \leq M}$ {i.i.d.} random variables such that
\begin{enumerate}
    \item for all $i = 1, \ldots, M$, $X_i$ is distributed according to $\Pi(.|A)$;
    \item conditional on $X_i$, the random variable $Y_i$ is distributed according to a Bernoulli random variable with parameter $\eta(X_i)$.
\end{enumerate}
As is commonly done in the active learning setting, we assume that the marginal distribution of $X$ is known~\citep{minsker2012plug, locatelli2017adaptivity}. In the next paragraph, we describe our active algorithm for classification. 
As important tools that nicely merge the active sampling and the use of the rejection, we will pay a particular attention to the definition of the uncertain region and the rejection rate.

\subsection{Overall description of the algorithm} 
\label{subsec:descrAlgo}

With a fixed number of label requests $N$ (called the budget), our overall objective is to provide an active learning algorithm which outputs a classifier that performs better than its passive counterpart. 
The framework that we consider (Algorithm \ref{alg:active learning}) is inspired from that developed in \citep{minsker2012plug}, in which we incorporate rejection to estimate the uncertain region.
%This novelty allows us to provide a computational algorithm with an optimal \textcolor{red}{label complexity} \bor{to be defined? or is it clear ?}. 

In the following, let $(\varepsilon_k)_{k \geq 0}$ be a sequence of positive numbers. Let $(N_k)_{k \geq 0}$ be a sequence defined such that $N_0 = \sqrt{N}$ and $N_{k+1} = \lfloor c_N N_k \rfloor $ with $c_N > 1$ (e.g., $c_N=1.2$ in Section~\ref{sec:experiments}). Furthermore, we consider $A_0 = \mathcal{X}=[0,1]^d$ the initial uncertain region, and thus $\varepsilon_0=1$.
We construct a sequence of uncertain regions $(A_k)_{k\geq 1}$ and for $k\geq 1$, an estimator $\hat{\eta}_k$  of $\eta$ on $A_k$ is provided.  

First, our algorithm performs an initialization phase:
\begin{itemize}
\item Initially, the learner requests the labels $Y$ of $N_0$ points $X_1, \ldots, X_{N_0}$ sampled in $A_{0}$ according to $\Pi_0=\Pi$. 
%\item $Y_k$ is distributed according to a Bernoulli distribution with parameter $\eta(X_k)$ such that conditionally on $(X_1, \ldots, X_k)$ the labels $Y_k$ are independent.
\item Based on the initial labeled data $\mathcal{D}_{N_0}= \{(X_1,Y_1),\ldots, (X_{N_0},Y_{N_0})\}$, an estimator $\hat{\eta}_0$ of $\eta$ on $A_0$ is computed and an initial classifier $g_{\hat{\eta}_{0}} = \one_{\{\hat{\eta}_{0} \geq 1/2\}}$ is provided.
\item An estimator of the score function $\hat{f}_0(x)=\max(\hat{\eta}_0(x), 1-\hat{\eta}_0(x))$ associated to $\hat{\eta}_0$ is computed.
\end{itemize}
Afterwards, our algorithm iterates over a finite number of steps until the label budget $N$ has been reached. Step $k\geq 1$ is described below.
\begin{itemize}
\item Based on the previous uncertain region $A_{k-1}$, a constant $\lambda_k$ is computed such that
conditional on the data
\begin{equation}
\label{eq:lambda}
 \lambda_k = \max\left\{t, \;\; \Pi\left(\hat{f}_{k-1}(X) \leq t |A_{k-1}\right) \leq \varepsilon_k\right\} \enspace,
\end{equation}
These $(\varepsilon_k)_{k \geq 0}$ define explicitly the \emph{sequence of the rejection rates}~\citep{Denis_Hebiri19}. 
\item This constant $\lambda_k$ is used to construct the \emph{current uncertain region} $A_k$ which is the set where the previous classifier $g_{\hat{\eta}_{k-1}}(\cdot) =  \one_{\{\hat{\eta}_{k-1}(\cdot) \geq 1/2\}} $ might fail and thus abstains from labeling :
$$
A_k = \{x \in A_{k-1}, \;\; \hat{f}_{k-1}(x) \leq \lambda_k\}\enspace,$$
where $\hat{f}_{k-1}(x)=\max(\hat{\eta}_{k-1}(x), 1-\hat{\eta}_{k-1}(x))$.
\item According to $\pi\left(.|A_k\right)$ the learner samples i.i.d. $(X_i,Y_i), i = 1, \ldots, \lfloor N_{k} \varepsilon_k \rfloor$ used to compute an estimator $\hat{\eta}_k$  of $\eta$ on $A_k$.
%a classifier $g_{\hat{\eta}_{k-1}}$ is provided, 
\item The learner updates the classifier over the whole space $\mathcal{X}$ as follows
$$
\hat{\eta} = \sum_{j = 0}^{k-1} \hat{\eta}_j \one_{\{A_{j} \backslash A_{j+1}\}} + \hat{\eta}_k \one_{\{A_k\}}\enspace.
$$
\end{itemize}
After the iteration process, the resulting active classifier with rejection is defined point-wise as
\begin{equation}
\label{eq:eqActiveClassifier}
\hat{g}(x) = \one_{\{\hat{\eta}(x) \geq 1/2\}}    \enspace.
\end{equation}

\subsection{Theoretical guarantees}
\label{subsec:consistency}

This section is devoted to the theoretical properties of the proposed procedure under common assumptions which are presented in Section~\ref{subsubsec:ass}. Thereafter, we state our main result in Section~\ref{subsubsec:rates_cve} that mainly shows that our algorithm achieves an optimal rate of convergence for the excess-risk when the considered classifier is the histogram rule. 

\subsubsection{Assumptions}
\label{subsubsec:ass}

We assume that $\mathcal{X} = [0,1]^d$ and consider two assumptions that are widely considered for the study of rates convergence in the passive~\citep{Audibert_Tsybakov07, Gadat_Klein_Marteau16} or active settings~\citep{minsker2012plug, locatelli2017adaptivity}. 

\begin{ass}[Smoothness assumption]
\label{ass:smoothness}
The regression function $\eta$ is $s$-Lipschitz-continuous for some $s\geq 0$, that is, for all $x,z \in[0,1]^d$: 
$$\vert\eta(x)-\eta(z)\vert\leq s.\parallel x-z\parallel_{\infty}\enspace.$$
\end{ass}

\begin{ass}[Strong density assumption]
\label{ass:strong density}The marginal probability admits a density $p_X$ and there exist constants $\mu_{min}, \mu_{max}>0$ such that for all $x \in [0,1]^d$ with $p_X(x)>0$, we have:
$$
\mu_{min}\leq p_X(x)\leq \mu_{max} \enspace.
$$
\end{ass}

Assumption~\ref{ass:smoothness} imposes the regularity of the regression function $\eta$  while Assumption~\ref{ass:strong density} ensures in particular that the marginal distribution of $X$ admits a density which is bounded from below. Furthermore, we also assume that $f(X)$ admits a bounded density. 

\begin{ass}[Score regularity assumption]
\label{ass:boundedness}Let $f(x)=\max(\eta(x),1-\eta(x))$ be the score function. The random variable $f(X)$ admits a bounded density (bounded by $C>0$).
\end{ass}
Assumption~\ref{ass:boundedness} has two important consequences. The first one is that the cumulative distribution function $F_{f}$
of $f(X)$ is Lipschitz. The second one is that the so-called Margin assumption~\citep{Tsybakov04} is fulfilled with margin parameter $\alpha = 1$. This Margin assumption is also considered in~\citep{minsker2012plug} for the study of optimal rates of convergence in the active learning framework.

\subsubsection{Rates of convergence}
\label{subsubsec:rates_cve}

In this section, we present our main theoretical result (Theorem~\ref{theo-rate}) which highlights the performance of our algorithm.
While our methodology can handle any machine learning algorithm for the estimation of the regression function $\eta$, we provide theoretical guarantee with the histogram rule (whose definition is recalled in Definition~\ref{def:estimator}) for the estimation of the regression function at each step of the procedure described in Section~\ref{subsec:descrAlgo}, as in~\citep{minsker2012plug}. For completeness, we provide the full proof of our result in this particular case in the Appendix.
%We provide the full proof of this result in the Appendix, where we consider the histogram rule (whose definition is recalled in Definition~\ref{def:estimator}) for the estimation of the regression function at each step of the procedure described in Section~\ref{subsec:descrAlgo}, as was done in~\citep{minsker2012plug}. 

Let us denote by $\mathcal{C}_r=\lbrace R_i, \;i=1,\ldots, r^{-d}\rbrace$ a cubic partition of $[0,1]^d$ with edge length $r>0$.

\begin{definition}[Histogram rule]
\label{def:estimator}
Let $A$ be a subset of $[0,1]^d$. Consider a labeled sample $\mathcal{D}_{N_A}= \left\{ (X_1^A,Y_1),\ldots,(X_{N_A}^{A},Y_{N_A})\right\} $ of size $N_A \geq 1$, such that $X_i^A$ $(i=1,\ldots,N_A)$ is distributed according to $\Pi(.\vert A)$. 
%Let us consider $\mathcal{C}_r=\lbrace R_i, \;i=1,\ldots, r^{-d}\rbrace$ a cubic partition with edge length $r$.
The histogram rule on $A$ is defined as follows.
Let $R_i$ $\in$ $\mathcal{C}_r$ with $R_i\cap A\neq \emptyset$. For all $x \in R_i$, 
$$\hat{\eta}_{A,N_A,r}(x) = \frac{\Pi(A)}{\Pi(R_i)}\dfrac{1}{N_A} \sum_{j=1}^{N_{A}} Y_j \one_{\{X_j \in R_i\}}\enspace.$$
\end{definition}
It is known that in the passive framework, the histogram rule achieves optimal rates of convergence~\citep{Devroye_Gyorfi_Lugosi96}.

\begin{theo}
\label{theo-rate}
Let $N$ be the label budget, and $\delta \in \left(0,\frac{1}{2}\right)$. Let us assume that Assumptions~\ref{ass:smoothness}, ~\ref{ass:strong density}, and ~\ref{ass:boundedness} are fulfilled. 
At each step $k \geq 0$ of the algorithm presented in Section~\ref{subsec:descrAlgo}, we consider
\begin{enumerate}
    \item[i)] $\hat{\eta}_k:=\hat{\eta}_{A_k,\lfloor N_k\Pi(A_k)\rfloor,r_k}$, with $r_k=N_k^{-1/(d+2)}$,
    \item[(ii)] and define $(\varepsilon_k)_{k\geq 0}$ as $\varepsilon_0=1$, and for $k\geq 1$,
    %\begin{equation*}
    $\varepsilon_k=\min\left(1,\log\left(\frac{N}{\delta}\right)\log(N) N_{k-1}^{-1/(2+d)}\right)$.
    %\end{equation*}
\end{enumerate}
%Let us assume that at each step $k\geq 0$ in our algorithm, the estimator $\hat{\eta}_k:=\hat{\eta}_{A_k,\lfloor N_k\Pi(A_k)\rfloor,r_k}$ is used, where $r_k=N_k^{-1/(d+2)}$. Moreover, if the sequence of rejection rate $(\varepsilon_k)_{k\geq 0}$ is defined as $\varepsilon_0=1$, and for $k\geq 1$,  $\varepsilon_k=\min\left(1,c_6\log\left(\frac{N}{\delta}\right) N_{k-1}^{-1/(2+d)}\right)$ $($where the constant $c_6$ will be defined later$)$, 
Then with probability at least $1-\delta$, the resulting classifier defined in Equation\eqref{eq:eqActiveClassifier} satisfies 
\begin{equation}
\label{eq:rate} 
R(g_{\hat{\eta}})-R(g^*)\leq \widetilde{O}\left(N^{-\frac{2}{1+d}}\right),
\end{equation}
where $\widetilde{O}$ hides some constants and logarithmic factors.
\end{theo}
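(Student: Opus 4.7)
The plan is to work on a high-probability event on which all the histogram estimators $\hat\eta_k$ are uniformly accurate, and then to exploit the Tsybakov-type margin condition that Assumption~\ref{ass:boundedness} provides (via the bounded density of $f(X)$, giving $\Pi(|\eta-\tfrac12|\le t)\le Ct$) to convert the iterative shrinkage of the sets $A_k$ into the claimed rate. The first step is to establish that, with probability at least $1-\delta$, for every $k=0,\dots,K$ with $K=O(\log N)$,
\[
    \sup_{x\in A_k}\bigl|\hat\eta_k(x)-\eta(x)\bigr|\;\le\;\gamma_k\;:=\;c_0\sqrt{\log(N/\delta)}\,N_k^{-1/(d+2)}.
\]
The histogram bias is of order $r_k=N_k^{-1/(d+2)}$ by Assumption~\ref{ass:smoothness}; the variance in each cell is controlled by Bernstein's inequality together with the lower bound $\Pi(R_i)\gtrsim r_k^{d}$ from Assumption~\ref{ass:strong density} and the conditional sampling structure built into Definition~\ref{def:estimator}; a union bound over the $O(r_k^{-d})$ cells and the $O(\log N)$ iterations yields the displayed uniform bound.

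On this good event, the geometry of the rejected regions can be controlled. From $|\hat f_{k-1}-f|\le\gamma_{k-1}$ on $A_{k-1}$, the rule~\eqref{eq:lambda} and the bounded density of $f(X)$ yield the key inclusion $A_k\subset\{|\eta-\tfrac12|\le\lambda_k-\tfrac12+\gamma_{k-1}\}$, together with the quantitative lower bound $\lambda_k-\tfrac12\ge\varepsilon_k\Pi(A_{k-1})/C-\gamma_{k-1}$ and the identity $\Pi(A_k)\le\varepsilon_k\Pi(A_{k-1})$. Decomposing the excess risk via~\eqref{eq:excess-risk} over the shells $A_k\setminus A_{k+1}$ and the terminal region $A_K$ then produces two complementary bounds on each piece. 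When $\lambda_{k+1}-\tfrac12>\gamma_k$ (the ``errorless'' regime, which holds for the initial shells thanks to the polylogarithmic cushion in $\varepsilon_k$ and the fact that $\Pi(A_k)$ still has order one), a sign flip of $g_{\hat\eta_k}$ would require an estimation error exceeding $\lambda_{k+1}-\tfrac12$ and is therefore excluded, so the shell contributes zero. In the complementary regime (and for the terminal region) the margin condition combined with $|\hat\eta_k-\eta|\le\gamma_k$ and the inclusion $A_k\subset\{|\eta-\tfrac12|\le\lambda_k-\tfrac12+\gamma_{k-1}\}$ bounds the shell contribution by $C(\lambda_k-\tfrac12+\gamma_{k-1})^2$, which decays geometrically with $k$.

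To close the argument I would count the label budget. The constraint $\sum_{k=0}^K\lfloor N_k\varepsilon_k\rfloor\le N$ with $N_k=c_N^k\sqrt{N}$ and $\varepsilon_k=\widetilde{O}(N_{k-1}^{-1/(d+2)})$ makes each summand of order $\widetilde{O}(N_k^{(d+1)/(d+2)})$; the resulting geometric sum is dominated by its last term, so the constraint forces $N_K=\widetilde{O}(N^{(d+2)/(d+1)})$, hence $\gamma_K=\widetilde{O}(N^{-1/(d+1)})$ and $K=O(\log N)$. Combining this with the shell bounds from the previous paragraph yields
\[
    R(g_{\hat\eta})-R(g^\ast)\;\le\;\widetilde{O}(\gamma_K^2)\;=\;\widetilde{O}\bigl(N^{-2/(d+1)}\bigr),
\]
which is~\eqref{eq:rate}.

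The main obstacle I anticipate is the bookkeeping in the second paragraph: the ``errorless'' and ``margin'' regimes must dovetail cleanly so that the sum of shell contributions is dominated by the terminal term $\gamma_K^2$ rather than by the larger, initial $\gamma_k^2$'s. The argument hinges on the fact that once the errorless regime breaks down, $\Pi(A_k)$ is already so small that the inclusion $A_k\subset\{|\eta-\tfrac12|\le\lambda_k-\tfrac12+\gamma_{k-1}\}$ forces $\lambda_k-\tfrac12+\gamma_{k-1}$ to decay geometrically, and hence the shell contributions telescope down to the terminal rate. Controlling the constants, the polylogarithmic cushion in~$\varepsilon_k$, and the saturation of the $\min$ in the definition of $\varepsilon_k$ is where the bulk of the technical work lies.
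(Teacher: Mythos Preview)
Your proposal shares the essential architecture of the paper's proof: a high-probability uniform bound on each $\hat\eta_k$ via Bernstein plus a union bound over cells and over steps (the paper's ``Favorable event'' lemma), the decomposition of the excess risk over the shells $A_k\setminus A_{k+1}$ and the terminal region, and the budget argument that forces $N_L=\widetilde{O}\bigl(N^{(d+2)/(d+1)}\bigr)$ so that $\gamma_L=\widetilde{O}\bigl(N^{-1/(d+1)}\bigr)$.

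The substantive difference is in how the shells are handled. You split into an ``errorless'' regime (which you expect only for initial shells, while $\Pi(A_k)$ is still of order one) and a ``margin'' regime for later shells, and you correctly flag the dovetailing of the two as the main obstacle. That obstacle is real in your formulation: the margin bound on shell $k$ is at best $O(\gamma_k^{2})$, and a geometric sum of \emph{decreasing} $\gamma_k^{2}$'s is dominated by its first term, not its last, so if the errorless regime failed at some $k_0\ll L$ the resulting bound would be $O(\gamma_{k_0}^{2})$ rather than $O(\gamma_L^{2})$. Note also that your inclusion $A_k\subset\{|\eta-\tfrac12|\le\lambda_k-\tfrac12+\gamma_{k-1}\}$ only yields a \emph{lower} bound on $\lambda_k-\tfrac12+\gamma_{k-1}$ in terms of $\Pi(A_k)$, so it does not force the shell contributions down as you suggest.

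The paper bypasses this entirely: its ``Correct classification'' lemma shows that on the good event the errorless regime holds for \emph{every} shell $A_k\setminus A_{k+1}$ with $k<L$, so the only contribution to the excess risk comes from the terminal region $A_L$ and is bounded directly by $O(\gamma_L^{2})$. The argument goes in the opposite direction from yours: rather than lower-bounding $\lambda_{k+1}-\tfrac12$ via $\varepsilon_{k+1}\Pi(A_k)/C-\gamma_k$ (which degrades as $\Pi(A_k)$ shrinks), the paper upper-bounds the conditional c.d.f.\ $F_{\hat f_k}\bigl(\tfrac12+\gamma_k\bigr)\le 3C\gamma_k$ on the good event and observes that the polylogarithmic cushion built into $\varepsilon_{k+1}$ makes $3C\gamma_k\le\varepsilon_{k+1}$ uniformly in $k$; by the quantile definition~\eqref{eq:lambda} this forces $\lambda_{k+1}\ge\tfrac12+\gamma_k$ for all $k<L$. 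With that in hand the dovetailing concern evaporates, and the proof reduces to the terminal bound plus the budget count, both of which you already have.
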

The above result calls for several comments. First, our active classifier $\hat{g}$ based on the histogram rule is optimal for the active sampling {\it w.r.t.} the misclassification risk up to some logarithmic factors (see~\citep{minsker2012plug} for the minimax rates, by considering Lipschitz regression function and the margin parameter equal to $1$. 
This rate is better than the classical minimax rate in passive learning under the strong density assumption which is of order $N^{-\frac{2}{2+d}}$, see for instance~\cite{Audibert_Tsybakov07}.
%Indeed, we obtain the same rates of convergence as in~\citep{minsker2012plug}, where lower bounds are also provided, in the case where the regression function is Lipschitz and the margin parameter is equal to $1$.
Second, the sequence of the rejection rates $(\varepsilon_k)_{k \geq 0}$ should be chosen in an optimal manner guided by our theoretical findings. In particular, for each $k$, the value of $\varepsilon_k$ is of the same order as an upper bound on the error \emph{w.r.t.} the $\ell_\infty$-norm of $\hat{\eta}_{k-1}$, valid with high probability.
This value of the $\varepsilon_k$ is also linked to the probability of the uncertain region in the procedure proposed by~\cite{minsker2012plug}. 
However, the major different with the latter reference is that our rejection rate is explicit and then our algorithm can be efficiently computed due to the use of rejection arguments to determine the uncertain regions. Finally, let us notify that our work can easily be extended for Hölder regression functions with parameter $\beta$. Indeed, for $\beta\geq 1$, we can consider a similar estimator as that introduced in Definition~\ref{def:estimator} with higher order histogram rule using smoothing kernel~\citep{gine2021mathematical}.

\begin{rem}
Theorem~\ref{theo-rate} is established assuming the knowledge of the marginal distribution of $X$. This is a classical assumption in active learning that helps for sampling. However, it is possible to extend our result to unknown distributions at the price of an additional unlabeled sample and then an additional factor $1/\sqrt{\text{size of the unlabeled sample}}$.
\end{rem}

In view of the above remark, we discuss the practical implementation
of our proposed algorithm in the following section.

\section{Practical considerations}
\label{sec:practical}
Some practical aspects of the procedure are discussed in Section~\ref{subsec:numAspect} and a simple numerical illustration is provided in Section~\ref{subsec:numEx}.
The full numerical experiments are presented in Section~\ref{sec:experiments}.

\subsection{Uncertain region}
\label{subsec:numAspect}

In this section, we discuss the effective computation of the uncertain regions.
Let $k \geq 1$ represent the current step $k$ of our algorithm.
We denote by $\mathcal{D}_M = \{X_1,Y_1), \ldots,(X_M,Y_M)\}$ the data that have been sampled until step $k$. The random variable $\hat{f}_{k-1}$ is the score function built at step $k-1$.

The construction of the uncertain region $A_{k}$ relies on
$\lambda_k$ which is solution of Equation~\eqref{eq:lambda}.
First of all, we randomize the score function $\hat{f}_{k-1}$ by introducing a variable $\zeta$ distributed according to a Uniform distribution on $[0,u]$ independent of $\mathcal{D}_M$ and by defining the randomized score function $\tilde{f}_{k-1}$ as
\begin{equation*}
\tilde{f}_{k-1}(X,\zeta) =  \hat{f}_{k-1}(X) + \zeta \enspace.   
\end{equation*}
Considering the randomized score $\tilde{f}_{k-1}$ instead of $\hat{f}_{k-1}$ ensures that conditionally on $\mathcal{D}_M$,
the cumulative distribution function of $\tilde{f}_{k-1}(X, \zeta)$, denoted by $F_{\tilde{f}_{k-1}}$, is continuous. Therefore, it implies that
\begin{equation*}
\tilde{\lambda}_k = \max\left\{t, \;\; \Pi\left(\tilde{f}_{k-1}(X) \leq t |A_{k-1}\right) \leq \varepsilon_k\right\}  = F_{\tilde{f}_{k-1}}^{-1}(\varepsilon_k)\enspace.
\end{equation*}
Hence, $\tilde{\lambda}_k$ is expressed simply as the $\varepsilon_k$-quantile of the c.d.f. $F_{\tilde{f}_{k-1}}$.
To preserve the statistical properties of $\hat{f}_{k-1}$, the parameter $u$ is chosen sufficiently small ({\it e.g.,} $u \rightarrow 0$). 

Note that the computation of the c.d.f.  $F_{\tilde{f}_{k-1}}$ requires the knowledge of the marginal distribution of $X$.
In practice, this distribution may be unknown. In a second step, based on a {\it unlabeled} dataset $\mathcal{D}_{M_k}^{U}=\lbrace X_{i},i=1,\ldots,M_k\rbrace$  with $X_{i}\sim \Pi(.\vert \hat{A}_{k-1})$, and $(\zeta_1, \ldots, \zeta_{M_k})$ i.i.d. copies of $\zeta$, we  consider an estimator $\hat{\lambda}_k$ of $\tilde{\lambda}_k$
defined as follows
\begin{equation*}
\hat{\lambda}_k =  \hat{F}_{\tilde{f}_{k-1}}^{-1}(\varepsilon_k),  
\end{equation*}
where conditionally on the data, $\hat{F}_{\tilde{f}_{k-1}}$ is the empirical c.d.f. of the random variable $\tilde{f}_{k-1}(X,\zeta)$:
\begin{equation*}
\hat{F}_{\hat{f}_k}(t) = \dfrac{1}{M_k} \sum_{i=1}^{M_k} \one_{\{\tilde{f}_k(X_i,\zeta_i) \leq t\}}\enspace.
\end{equation*}
Furthermore, the unlabeled set $\mathcal{D}_{M_k}^{U}$ is assumed to be independent of $\mathcal{D}_M$, and since it remains unlabeled, it does not contribute to the budget.

Formally, the uncertain region 
$A_k$ is then defined as follows
\begin{equation*}
A_k = \left\{(x,\zeta) \in \mathcal{X} \times [0,u], \;\; \tilde{f}_{k-1}(x, \zeta) \leq \hat{\lambda}_k\right\}  \enspace.  
\end{equation*}
Therefore, $X_{M+1} \sim \Pi\left(.|A_k\right)$, is sampled from $\Pi$ such that $\tilde{f}_{k-1}(X_{M+1},\zeta) \leq \hat{\lambda}_k$ with $\zeta$ distributed according to $\mathcal{U}_{[0,u]}$.

\begin{algorithm}[h!]
\label{alg:active learning}
\KwIn{label budget $N$}
\textbf{Initialization}\\
The uncertain region $\hat{A}_0=[0,1]^d$\\
$N_0=\lfloor\sqrt{N}\rfloor$\\
$k=1$\\
$B=N_0$\\
$\varepsilon_0=1$, for all $k\geq 1$, define the rejection rate $\varepsilon_k$\\

\For{$i=1$ \textbf{to} $N_0$}{Sample i.i.d $(X_{i,0},Y_{i,0})$ with $X_{i,0}\sim \Pi$}
$\mathcal{D}_{N_0}=\lbrace (X_{1,0},Y_{1,0})\ldots,(X_{N_0,0},Y_{N_0,0})\rbrace$\\
Based on $\mathcal{D}_{N_0}$, compute an estimator $\hat{\eta}_{\mathcal{D}_{N_0}}$.\\
$\hat{\eta}_{0}:=\hat{\eta}_{\mathcal{D}_{N_0}}$

 \While{$B+\lfloor N_{k} \varepsilon_k \rfloor\leq N$}{
      Sample i.i.d $D_{M_k}^{U}=\lbrace X_{i},i=1,\ldots,M_k\rbrace$ with $X_{i}\sim \Pi(.\vert \hat{A}_{k-1})$.\\
      Based on $D_{M_k}^{U}$, compute $\hat{\lambda}_k$ such that $\widehat{\mathbb{P}}(\hat{f}_{k-1}\leq \hat\lambda_k\vert \hat{A}_{k-1})=\varepsilon_k$\\
      $\hat{A}_k:=\{x \in \hat{A}_{k-1}, \;\; \hat{f}_{k-1}(x) \leq \hat{\lambda}_k\}$\\
      $N_{k}=c_N N_{k-1}$\\
      \For{$i=1$ \textbf{to} $\lfloor N_{k} \varepsilon_k \rfloor$}{Sample i.i.d $(X_{i,k},Y_{i,k})$ with $X_{i,k}\sim \Pi(.\vert \hat{A}_{k})$}
      $\mathcal{D}_{N_k}=\lbrace (X_{1,k},Y_{1,k})\ldots,(X_{\lfloor N_{k} \varepsilon_k \rfloor,k},Y_{\lfloor N_{k} \varepsilon_k \rfloor,k})\rbrace$\\
      Based on $\mathcal{D}_{N_0}$, compute an estimator $\hat{\eta}_{\mathcal{D}_{N_k}}$\\
      $\hat{\eta}_k:=\hat{\eta}_{\mathcal{D}_{N_k}}$\\
      $\hat{\eta} = \sum_{j = 0}^{k-1} \hat{\eta}_j \one_{\{\hat{A}_{j} \backslash \hat{A}_{j+1}\}} + \hat{\eta}_k \one_{\{\hat{A}_k\}}$\\
      $B=B+\lfloor N_{k} \varepsilon_k \rfloor$\\
      $k = k+1$\\}
      
\KwOut{$\hat{g}_{\hat{\eta}}(x)=\one_{\{\hat{\eta}(x)\geq 1/2\}}  \text{for all } x \in [0,1]^d$}

\caption{Active learning with rejection}  
\end{algorithm}

\subsection{Illustrative example}
\label{subsec:numEx}

For illustrative purposes, a two-dimensional dataset of $10^6$ data points was generated using a regression function $\eta(x_1,x_2)= \frac{1}{2}(1+\sin(\frac{\pi x_2}{2}))$. 
We chose the estimators $\hat{\eta_k}$ to be linear, to make the comparison with the best linear classifier ($x_2=0$) straightforward. 
The budget was set to $N=5000$, and the sequences of $N_k$ and $\varepsilon_k$ were chosen as 
$N_k = \lfloor 1.2 \, N_{k-1}\rfloor$ and $\varepsilon_k=0.95 \, \varepsilon_{k-1}$, starting with $N_0=\lfloor\sqrt{N}\rfloor$ and $\varepsilon_0=1$. The parameter $M_k$ was set to 150.
A discussion of this choice of parameters can be found in Section~\ref{sec:parameters}.

Figure~\ref{fig:line} represents the situation after the step $k=2$ of the algorithm. 
At step $k=1$ and $k=2$, $\lambda_{k}$ has been computed using \eqref{eq:lambda}, which allows to classify the points in $\hat{A}_{k-1} \setminus \hat{A}_k$ (represented in black for $k=1$ and in brown for $k=2$). 
For visualization purposes, the points remaining in $\hat{A}_2$ have been colored according to their labels ($y=1$ in green and $y=0$ in blue), even though these labels are unknown at this step of the algorithm.
The yellow points are those in $\hat{A}_2$ whose label has already been requested to the oracle.
At subsequent steps, points in $A_k$ are selected according to the rejection rates shown in the center part of Figure~\ref{fig:line}, which shows the theoretical reject rates ($\varepsilon_k$, defined in Algorithm~\ref{alg:active learning}) in blue and the experimental ones ($\hat{\varepsilon}_k$, counted as the number of points effectively rejected) in red. The latter were computed by repeating the simulations 10 times, to present the average results along with the standard deviations in grey.
As a whole, the rejection rate is well estimated with only $M_k = 150$ unlabeled samples. However, the standard deviations indicates that the rejection rate is harder to control towards the end of the algorithm, because less points are available to estimate $\varepsilon_k$.

\begin{figure}[ht!]
\centering
\includegraphics[width=.3\textwidth]{./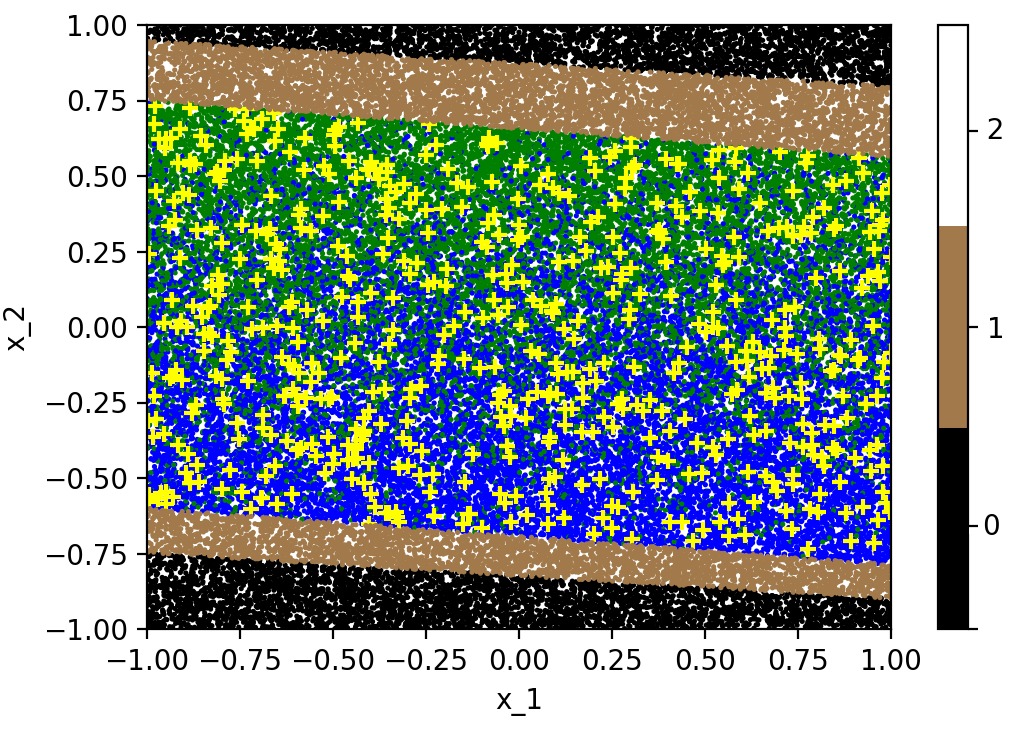}
\includegraphics[width=.3\textwidth]{./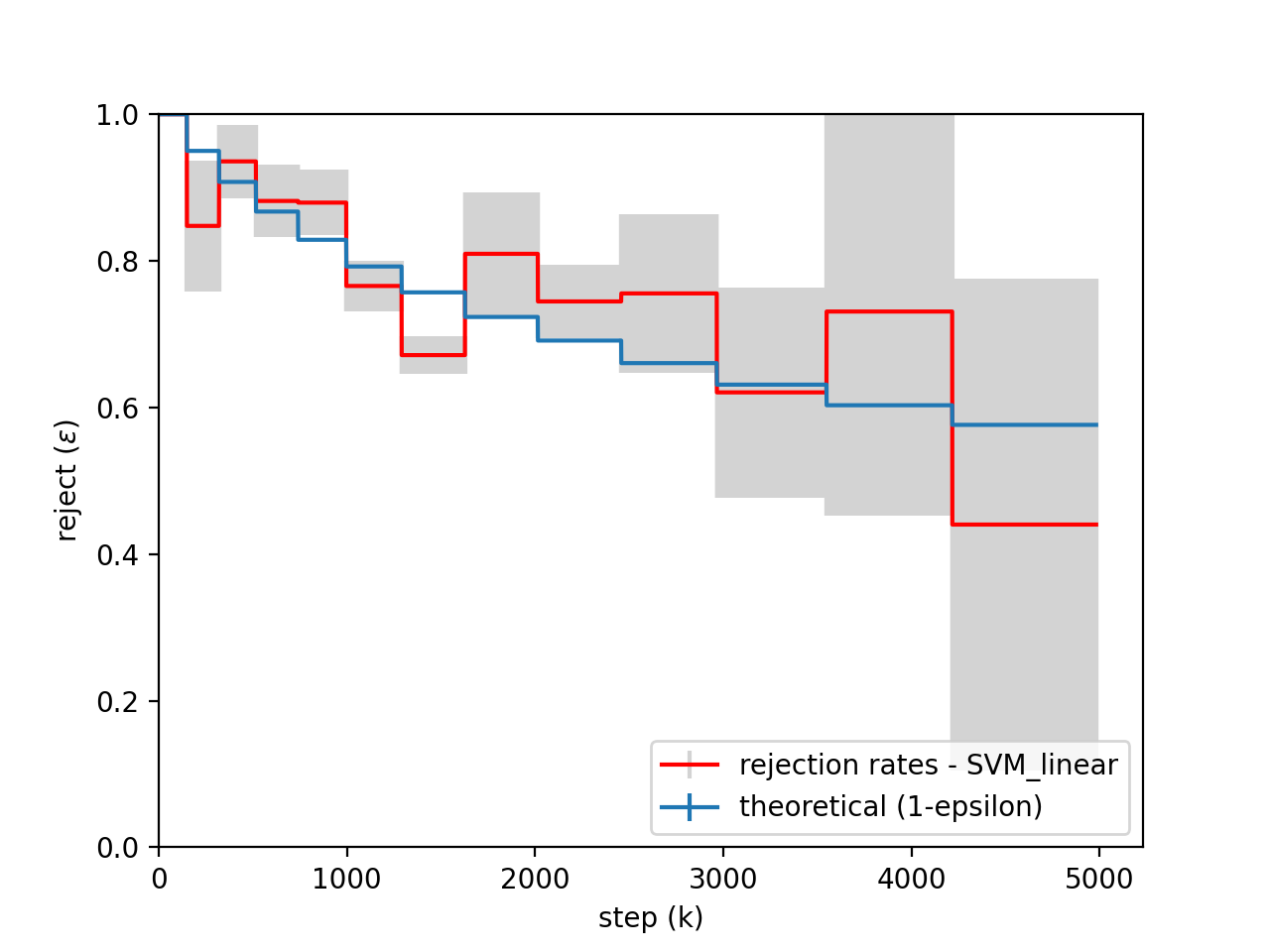}
\includegraphics[width=.3\textwidth]{./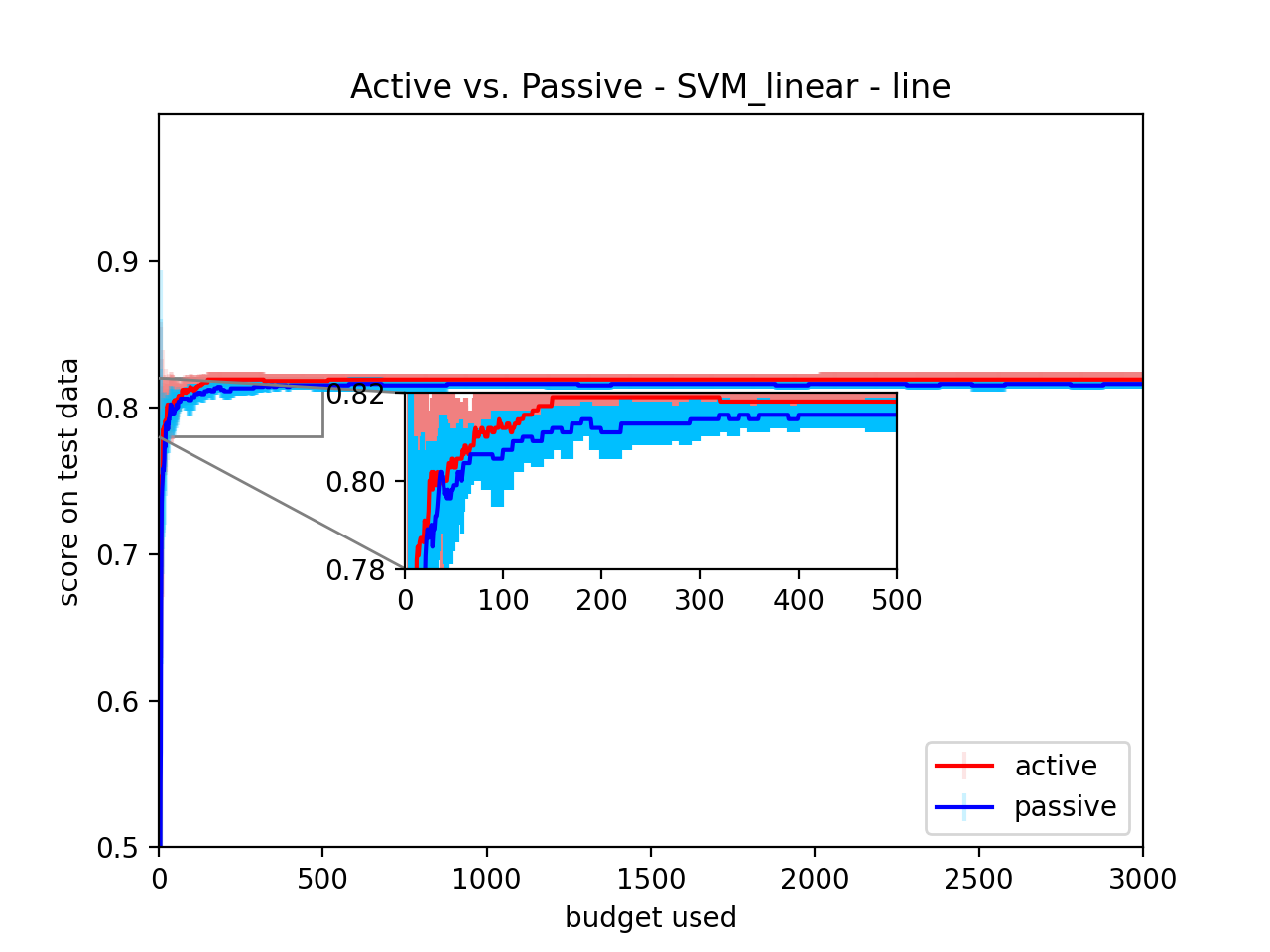}
\caption{\label{fig:line}
Left: Illustrative dataset after the step $k=2$ of the algorithm. The points in black belong to $\hat{A}_0 \setminus \hat{A}_1$ and the brown ones to $\hat{A}_1 \setminus \hat{A}_2$. In $\hat{A}_2$ are the yellow points whose label have been requested to the oracle and the remaining points in green and blue correspond to $y=1$ and $y=0$, respectively. Center: theoretical ($\varepsilon_k$, blue) and experimental ($\hat{\varepsilon}_k$, red with error bars in grey) rejection rates. Right: active vs. passive learning curves.}
\end{figure}

The resulting learning curves for passive and active procedures are represented on the right of Figure~\ref{fig:line}. 
As expected with this simplistic illustrative dataset, using active learning does not provide a substantial advantage in the long run (test precision = $0.817 \pm 0.005$ for active; $0.816 \pm 0.003$ for passive), because the optimal classifier is relatively easy to find in passive learning, even with noisy data.
However, the right panel of Figure~\ref{fig:line} shows that for a given small budget (e.g., $N < 500$), active learning converges faster than passive learning. This will be further examined in Section~\ref{sec:experiments}.

\section{Numerical experiments}
\label{sec:experiments}

\subsection{Parameters choice and sampling strategy}
\label{sec:parameters}
This Section discusses some aspects of the practical implementation of our algorithm. 
\paragraph{Parameters choice}
To perform numerical experiments, a few parameters of our model have to be set. 
First, the sequence of rejection rates was defined such that $\varepsilon_{k+1} = c_\varepsilon \varepsilon_k$, with $\varepsilon_0=1$ and $c_\varepsilon \in ]0,1[$. 
 If $c_\varepsilon$ is small, the uncertain region $\hat{A}_k$ will be small, which corresponds to an "aggressive" strategy where many points are considered to be correctly classified at each step. Conversely, if $c_\varepsilon$ is large, the strategy will be more "conservative".
Second, the constant $c_N$ defines the sequence $N_k$ as $N_k=\lfloor c_N N_{k-1} \rfloor$ and thus the number of points asked to the oracle at step $k$ ($\lfloor N_{k} \hat{\varepsilon}_k \rfloor$ on line 17 of Algorithm~\ref{alg:active learning}).
If $c_N$ is large, the algorithm will use many points at each step, thereby consuming the budget faster. A larger budget therefore allows a larger $c_N$.
Third, the number of points to build the initial classifier is theoretically set to $N_0=\lfloor\sqrt{N}\rfloor$. 
In practice, this number can be increased to get a better estimate of $\hat{\eta}_0$. Using a larger $N_0$ will however consume the budget faster. 
Third, $M_k$ unlabeled data points in $D_{M_k}^{U}$ are used at each step to estimate $\hat{\lambda}_k$. 
If $M_k$ is large, the estimation of $\hat{\lambda}_k$ will be more accurate. As these $M_k$ points remain unlabeled, they do not contribute to the budget, and $M_k$ could in principle be large. The only restriction is that at each step $k$ these (unlabeled) points have to be sampled independently of the (labeled) points asked to the oracle, it indirectly limits the number of points available to the oracle. 
Several experiments (results not shown) indicate that $M_k\ge100$ provides a reasonable estimate of $\hat{\lambda}_k$.
Finally, the parameter $u$ in Section~\ref{subsec:numAspect} has been set to $10^{-5}$. Its precise value does not affect much the results, as long as it remains close to 0.

Unless otherwise stated, our numerical experiments were performed using a "conservative approach, with the parameters discussed above set to $c_\varepsilon=0.95$, $c_N=1.2$, $N_0=2 \lfloor\sqrt{N}\rfloor$ and $M_k=150$.

\paragraph{Sampling strategy}
We designed a sampling strategy that re-uses points whenever possible, using two recycling procedures explained below.
This is not so important in our numerical experiments with synthetic data (Section~\ref{sec:synthetic}), where $10^5$ data points are used to mimic the theoretical situation with an "infinite" pool of data. However it can become crucial in practical applications with limited labeled data, as in the non-synthetic datasets used in Section~\ref{sec:non-synthetic}.

The first recycling procedure is that the unlabeled points from step $k-1$ will be re-used at step $k$. This does not invalidate our theory just because of the additive form of the risk over cells $A_k$. Indeed, our trained estimator has the form $\hat{g}(\cdot) = \sum_{k}\hat{g}_k(\cdot) \one_{A_k}(\cdot)$ and then its overall risk $R(\hat{g})$ can be decomposed on the different regions $A_k$ (by conditioning on the data used to approximate the region from the previous iteration). 
%contradict the independence hypothesis at step $k$ because of the final expression of the risk of our trained estimator, that is $\hat{g}(\cdot) = \sum_{k}\hat{g}_k(\cdot) \one_{A_k}(\cdot)$, the overall risk $R(g)$ can be decomposed on the different regions $A_k$ (by conditioning on the data used to approximate the region from the previous iteration). \xav{vérifier la formulation}

The second recycling procedure is that the data already labeled by the oracle at previous iterations (up to $k-1$ included) are reused to train $\hat{\eta}_k$, as long as they belong to the region $A_k$.
 A similar procedure was used in \citep{urner2013plal}. This allows to improve the estimation of $\hat{\eta}_k$ and to limit the budget consumption. 
 This sampling strategy is permitted because of the expression of the estimator and the decomposition of the risk as noted above.
It is particularly useful in practical applications where the total amount of labeled data is limited.

\subsection{Synthetic datasets}
\label{sec:synthetic}
\paragraph{Setting}
These numerical experiments were performed using $10^5$ data points with a budget of $N=5000$. The accuracy was tested on an independent test set of 5000 points, that were never used at any step in the algorithm. The parameters are set according to Section~\ref{sec:parameters}.

The algorithm was first challenged on three synthetic two-dimensional binary datasets (named dataset 1, 2, and 3, respectively), to study cases in which it is favorable.
Dataset 1 aims at reproducing in two dimensions a toy example used by \citep{dasgupta2011two}, where the best linear classifier is located at $x_1=-0.3$ but active learning algorithms could be misled to $x_1=0$.
%A uniform background is added to satisfy Assumption~\ref{ass:continuity}.
Dataset 2 represents a situation where some data ($x_1<0$) are easy to classify while others ($x_1>0$) are not. 
Dataset 3 is a mixture of Gaussian distributions, whose parameters can be adjusted to create various degrees of overlap. The results presented here correspond to $\sigma=0.3$.

\begin{figure}[ht!]
\centering
\includegraphics[width=.3\textwidth]{./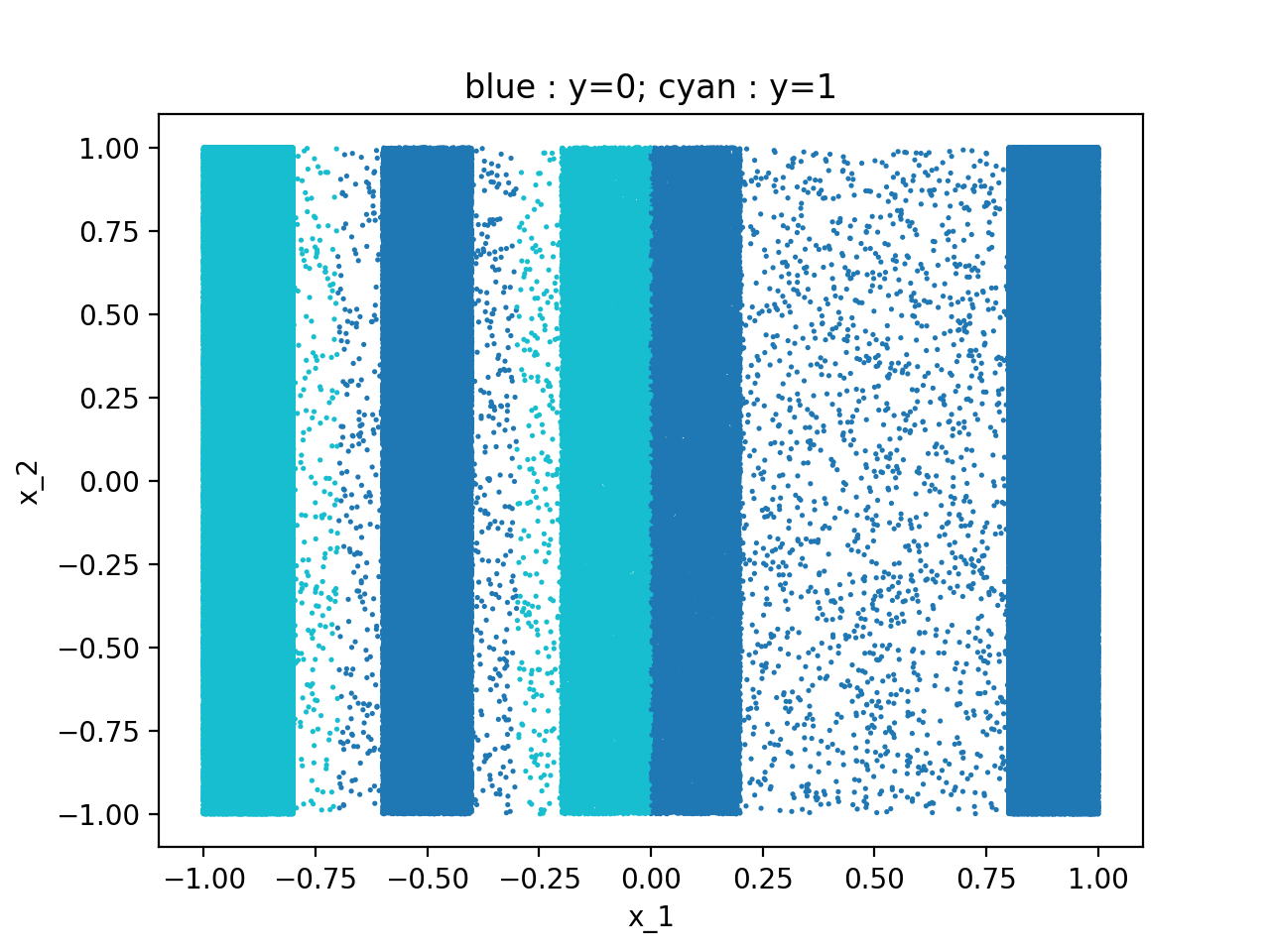}
\includegraphics[width=.3\textwidth]{./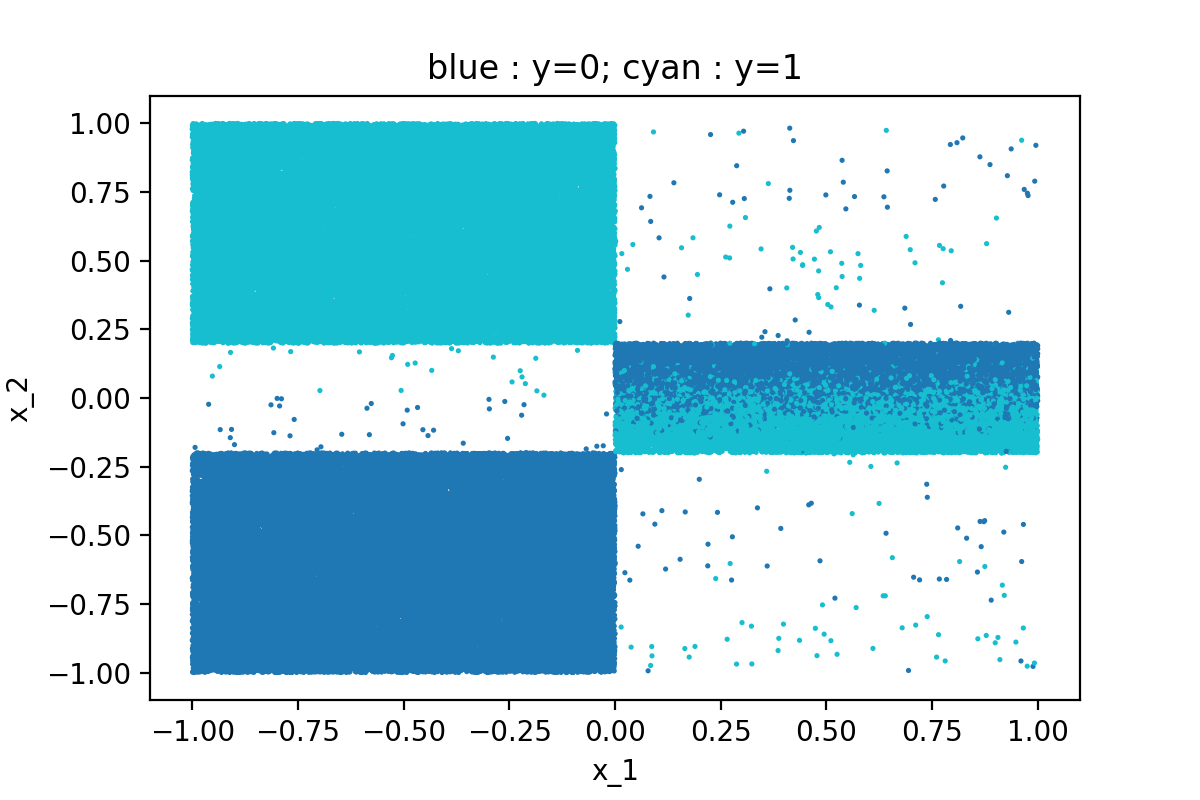}
\includegraphics[width=.3\textwidth]{./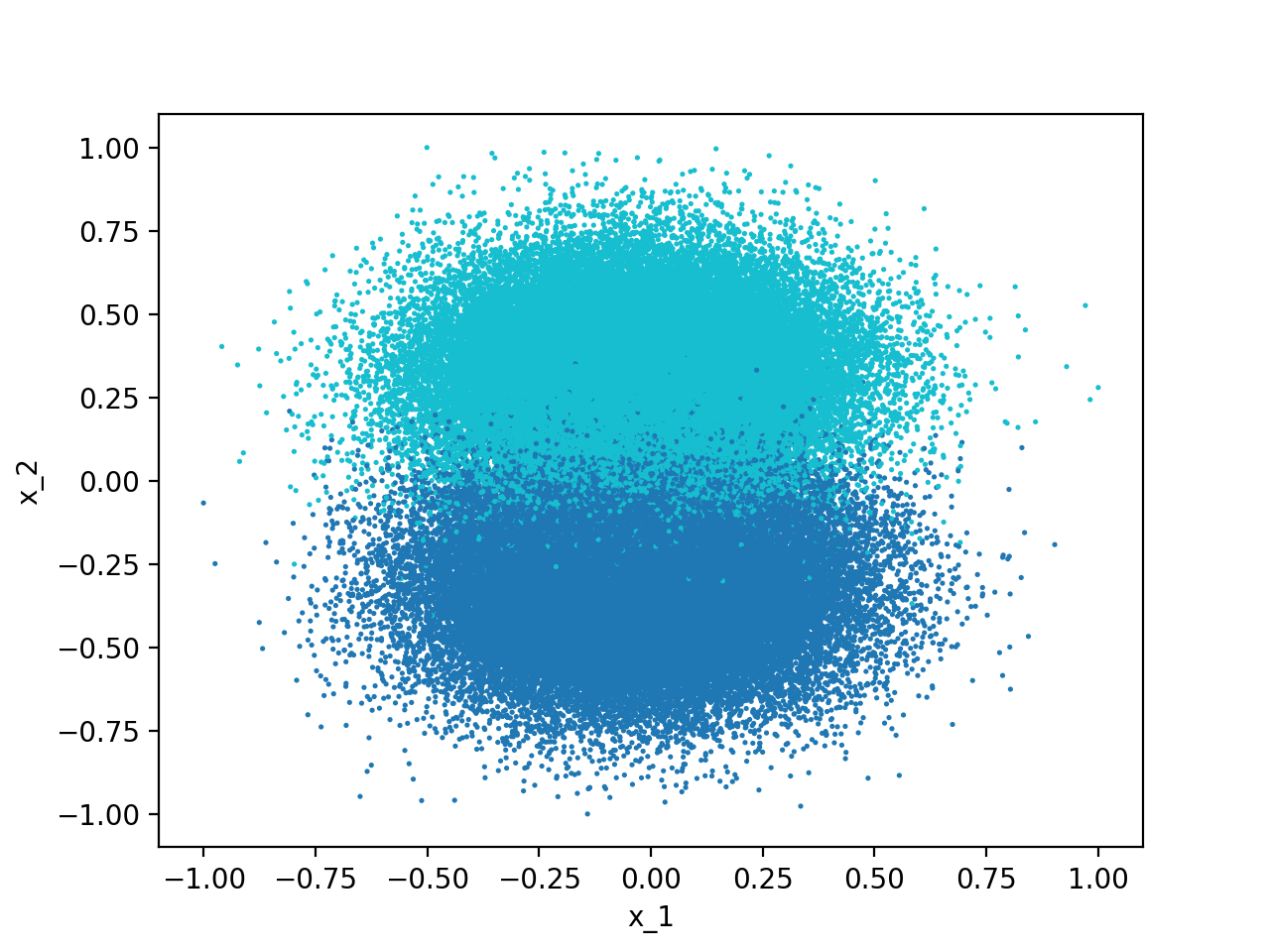} \\
\includegraphics[width=.3\textwidth]{./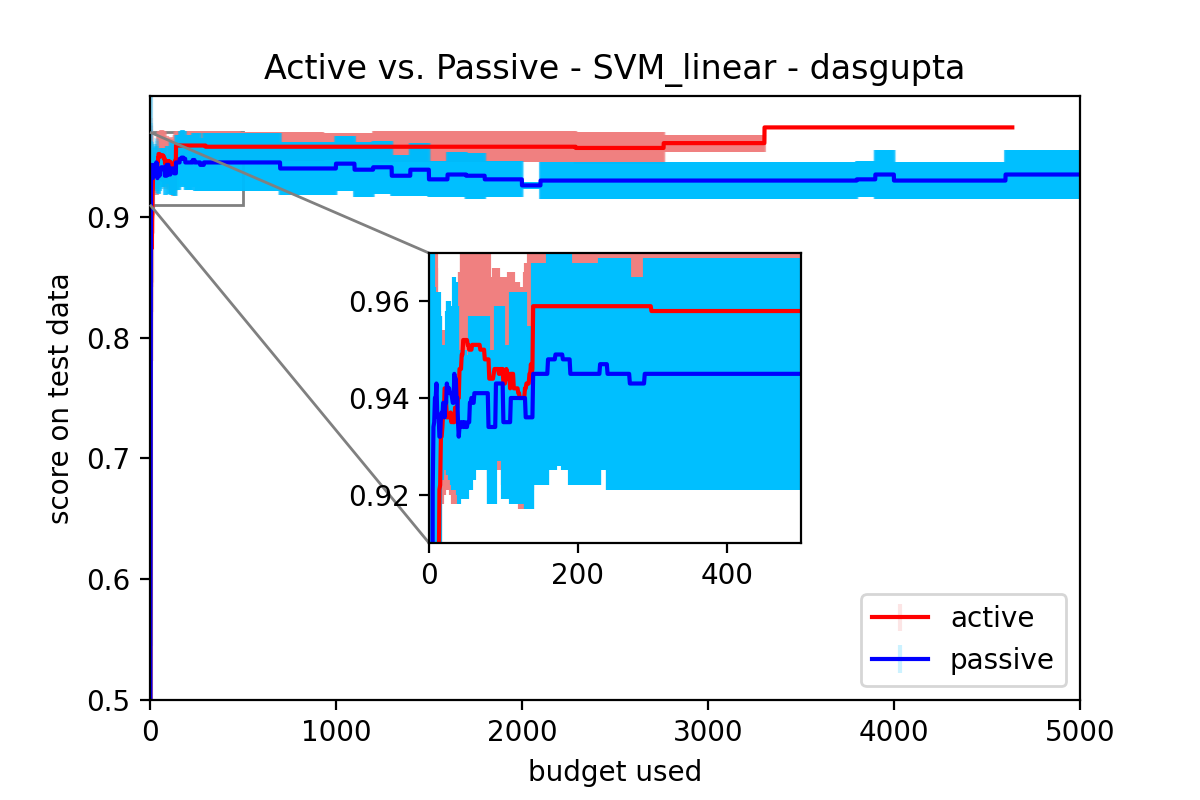}
\includegraphics[width=.3\textwidth]{./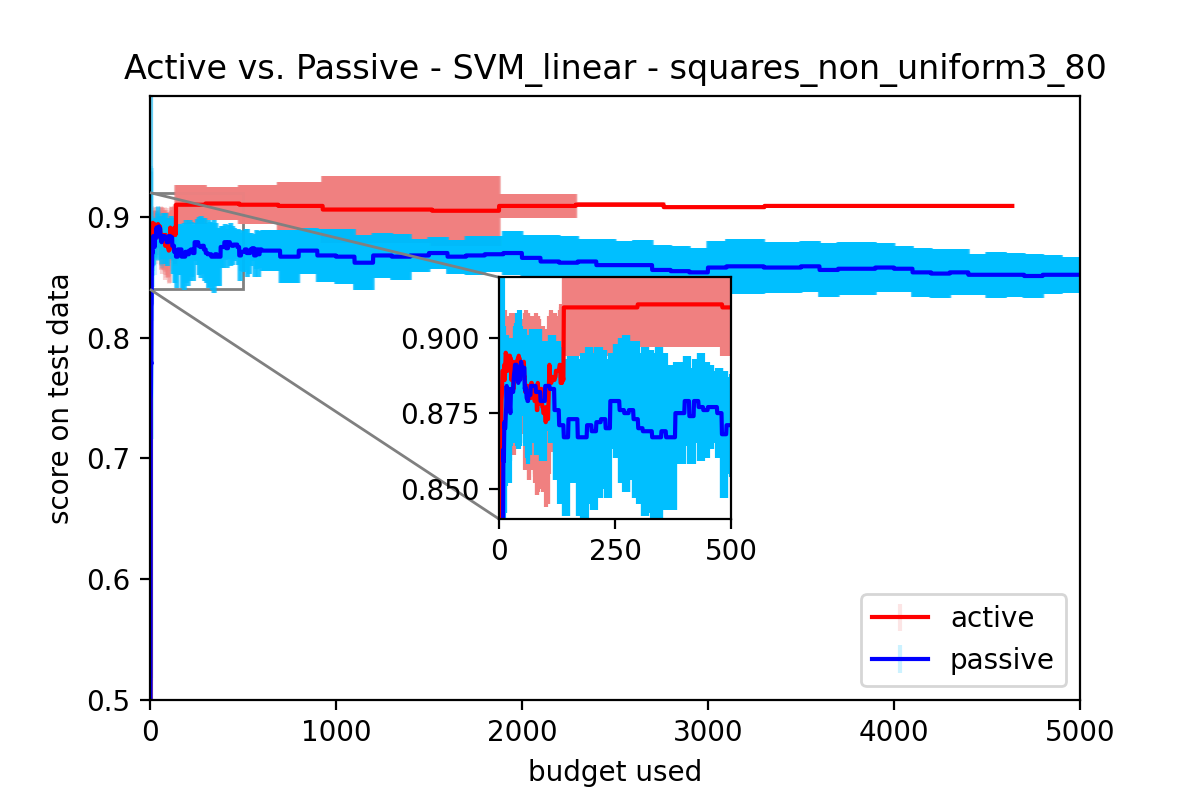}
\includegraphics[width=.3\textwidth]{./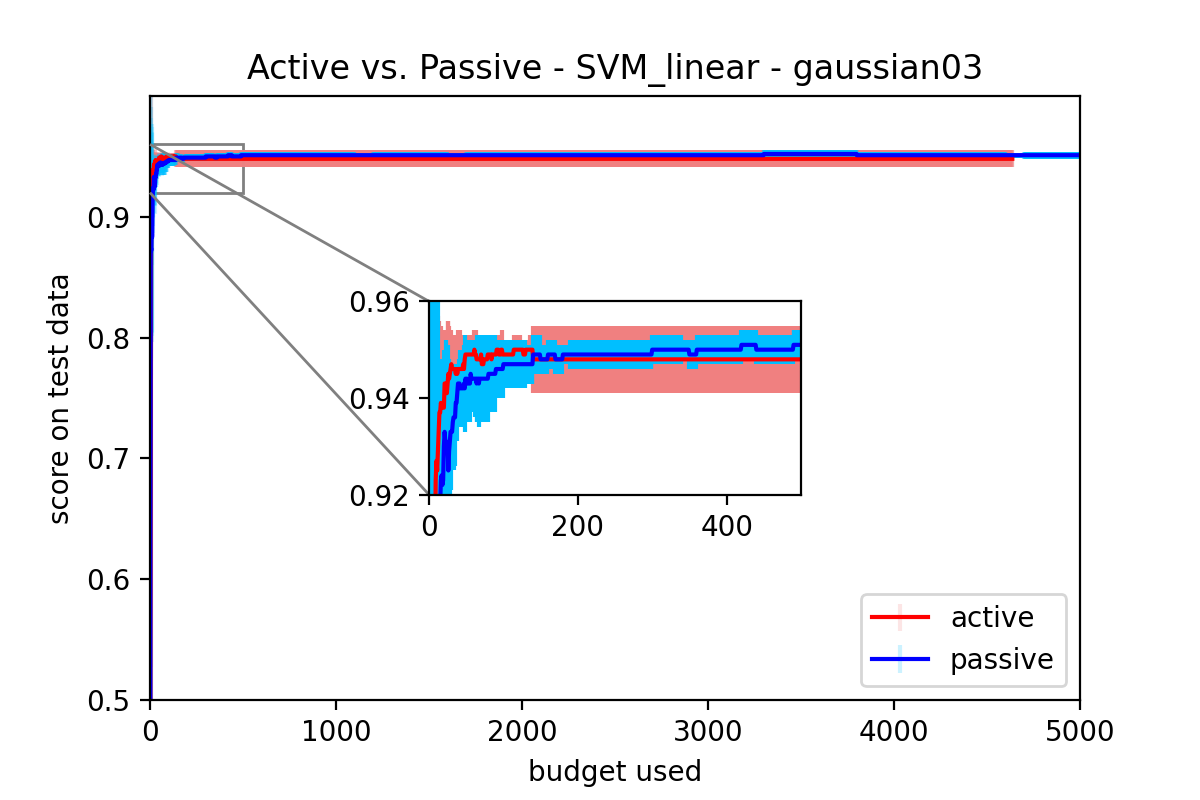}
\caption{
\label{fig:synthetic}
Top : From left to right, synthetic datasets 1, 2, and 3 used in this study with the points colored in blue or cyan depending on their class. Bottom : corresponding learning curves for active and passive linear classifiers.}
\end{figure}

The datasets are presented on Figure~\ref{fig:synthetic} as well as the corresponding learning curves for our active learning algorithm and its passive counterpart in the case of several classifiers: linear SVM, SVM with a Gaussian kernel, random forests and $k$ nearest neighbors. These classifiers are from the \texttt{scikit-learn} library \citep{scikit-learn}. Several parameters were tested, with similar results. 
The results in Table~\ref{tab:synthetic} are with the following parameters: regularization constant $C=5$ for SVM, 100 trees for random forests, $k=5$ for $k$NN. The other parameters are kept to their default value.

%In the active algorithm, the zone $A_1$ contains most of the data with $x_1 > 0.8$. This allows the subsequent classifiers to focus on the parts that are most difficult to classify. A major improvement over the passive counterpart is observed after a few iterations already.

\begin{table}[htp]
\begin{center}
\begin{tabular}{|c|c|c|c|c|}
\hline
dataset &  classifier & budget & \multicolumn{2}{c|}{test precision} \\
\cline{4-5}
id &   & $N$ & passive & active\\
\hline
1 & SVM linear & 5000 &  0.935 $\pm$ 0.020 & \textbf{0.974 $\pm$ 0.00}  \\
 & & 200 & 0.945 $\pm$ 0.023 & \textbf{0.959 $\pm$ 0.012}  \\
& SVM rbf & 5000 & 0.975  $\pm$   0.003   & \textbf{0.996 $\pm$ 0.002}  \\
& & 200 &  0.964 $\pm$ 0.012  & \textbf{0.989 $\pm$ 0.022} \\
& random forests & 5000 &   1.000   $\pm$  0.000  &   1.000  $\pm$   0.000 \\
& & 200 &  0.989 $\pm$ 0.004  & \textbf{0.997 $\pm$ 0.008}  \\
 & kNN ($k=5$) & 5000 &  0.995 $\pm$    0.002  &  0.995  $\pm$   0.002  \\
 & & 200 & 0.956 $\pm$ 0.011 & \textbf{ 0.993 $\pm$ 0.013}  \\
\hline
2 & SVM linear & 5000 & 0.852 $\pm$ 0.015 &   \textbf{ 0.909 $\pm$ 0.000}\\
 &  & 200 & 0.871 $\pm$ 0.026  & \textbf{ 0.910 $\pm$ 0.016}  \\
& SVM rbf & 5000  &   0.966 $\pm$ 0.003  &  \textbf{0.968 $\pm$ 0.003} \\
& & 200  &  0.951 $\pm$ 0.007 &  \textbf{0.967 $\pm$ 0.005} \\
& random forests & 5000 & 0.965 $\pm$ 0.003 &    0.965 $\pm$ 0.003 \\
& & 200 & 0.957 $\pm$ 0.005 &  \textbf{0.965 $\pm$ 0.003} \\
 & kNN ($k=5$) & 5000 & 0.965 $\pm$ 0.003 &   \textbf{0.967 $\pm$ 0.003} \\
  &  & 200 &  0.950 $\pm$ 0.012  &  \textbf{0.963 $\pm$ 0.010} \\
\hline
3 & SVM linear & 5000 &  0.951   $\pm$  0.003  &  0.948  $\pm$   0.007  \\
  &  & 200 & 0.949 $\pm$ 0.003 & 0.948 $\pm$ 0.007 \\
& SVM rbf & 5000 &   \textbf{0.952  $\pm$  0.003}  & 0.943 $\pm$ 0.012  \\
& & 200 & \textbf{0.948 $\pm$ 0.003}   & 0.942 $\pm$ 0.011 \\
& random forests & 5000 &  0.944 $\pm$ 0.002  & 0.943 $\pm$ 0.006  \\
 &  & 200 & 0.942 $\pm$ 0.008   &  0.943 $\pm$ 0.007 \\
& kNN ($k=5$) & 5000 &  0.946 $\pm$ 0.004  &  0.945 $\pm$ 0.004 \\
&  & 200 &  0.944 $\pm$ 0.005  &  0.945 $\pm$ 0.003  \\
\hline
\end{tabular}
\end{center}
\caption{
\label{tab:synthetic}
Results on synthetic datasets 1, 2, and 3 for budgets of 5000 or 200, with several classifiers: linear SVM, SVM with Gaussian kernel (called SVM rbf here), random forests (with 100 trees), and $k$ nearest neighbors ($k$NN, with $k=5$)}
\end{table}

\paragraph{Results for datasets 1 and 2}

In the case of SVM linear classifiers, our active learning algorithm is clearly superior to its passive counterpart for datasets 1 and 2, either with the larger budget ($N=5000$) or with the smaller budget ($N=200$).
The situation is similar for SVM with Gaussian kernel, although it is less pronounced for dataset 2 at large budget. 
In the case of random forests and $k$NN, the difference is barely noticeable at large budget, but our algorithm is clearly superior with the smaller budget.

\paragraph{Results for dataset 3}
Dataset 3 was designed to represent an easier classification problem. In this case our active learning algorithm does not present any advantage, although it does not significantly deteriorates the results (only slightly for SVM with Gaussian kernel).

\subsection{Non-synthetic datasets}
\label{sec:non-synthetic}
Several experiments were performed with various dataset from the UCI machine learning repository.
Three "large" (more than 10000 data points) were used: \textit{skin} (245057 points in $\R^3$), \textit{fraud} (20468 points in $\R^{113}$) and \textit{EEG} (14980 points in $\R^{14}$).
For those "large" datasets a maximum budget of $N=3000$ was used.

Three "small" (less than 1000 data points) were also considered: \textit{breast} (683 points in $\R^{10}$), \textit{cleveland} (297 points in $R^{13}$), \textit{credit} (690 points in $R^{14}$).
For those "small" datasets a maximum budget of $N=500$ was used.

The results for the largest dataset (\textit{skin}) are presented as learning curves on Figure~\ref{fig:skin}. All results are summarized in Table~\ref{tab:large-non-synthetic}.

\begin{figure}[ht!]
\centering
\includegraphics[width=.4\textwidth]{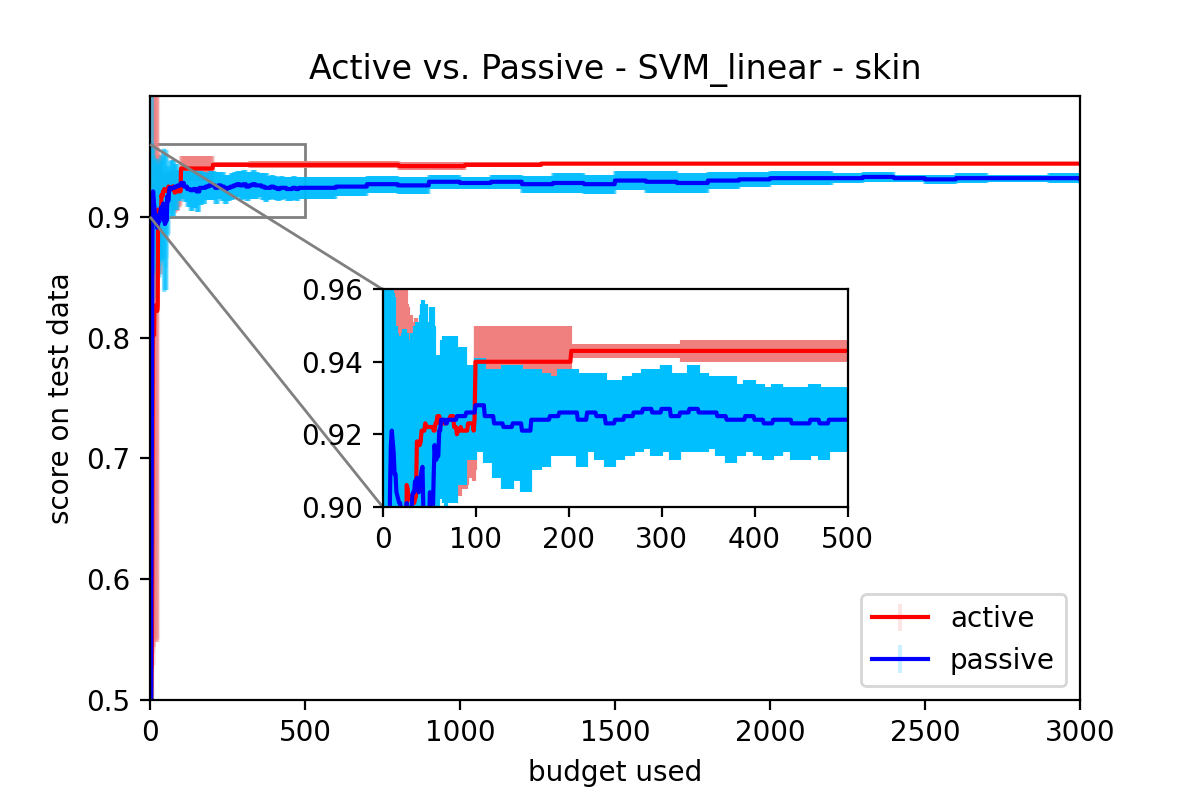}
\includegraphics[width=.4\textwidth]{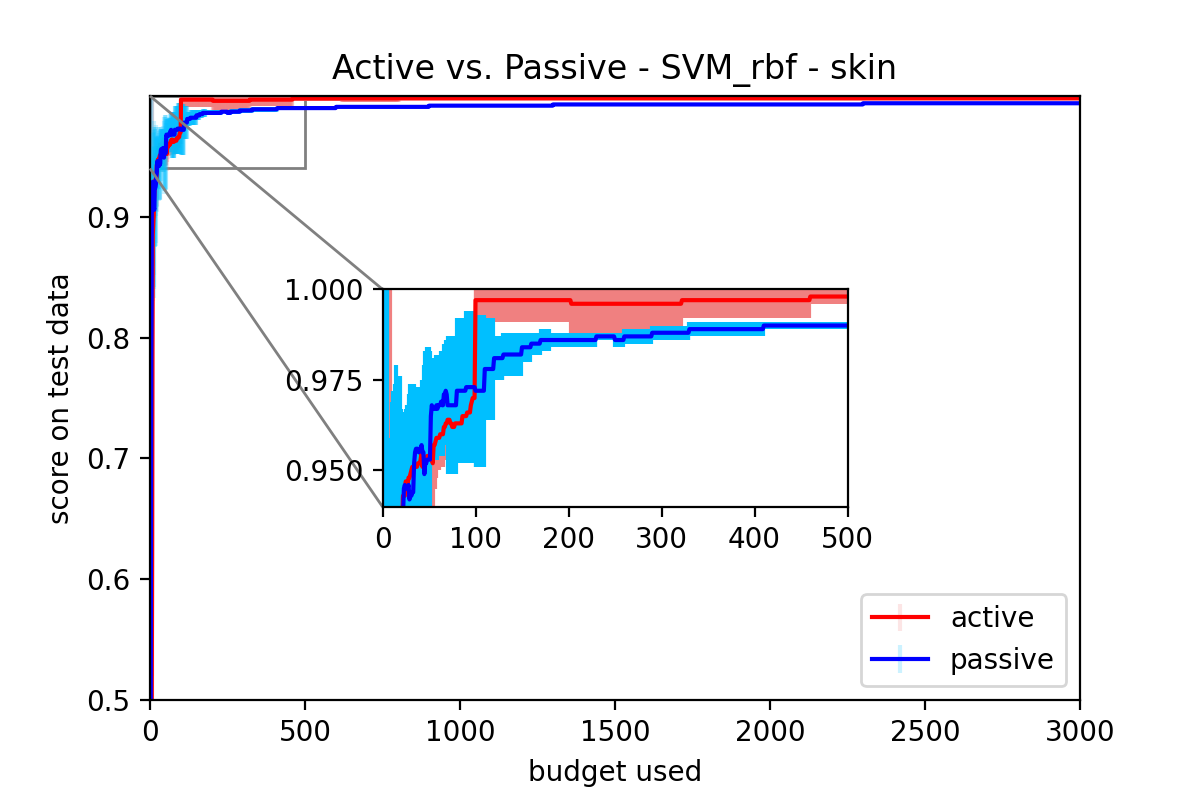}
\includegraphics[width=.4\textwidth]{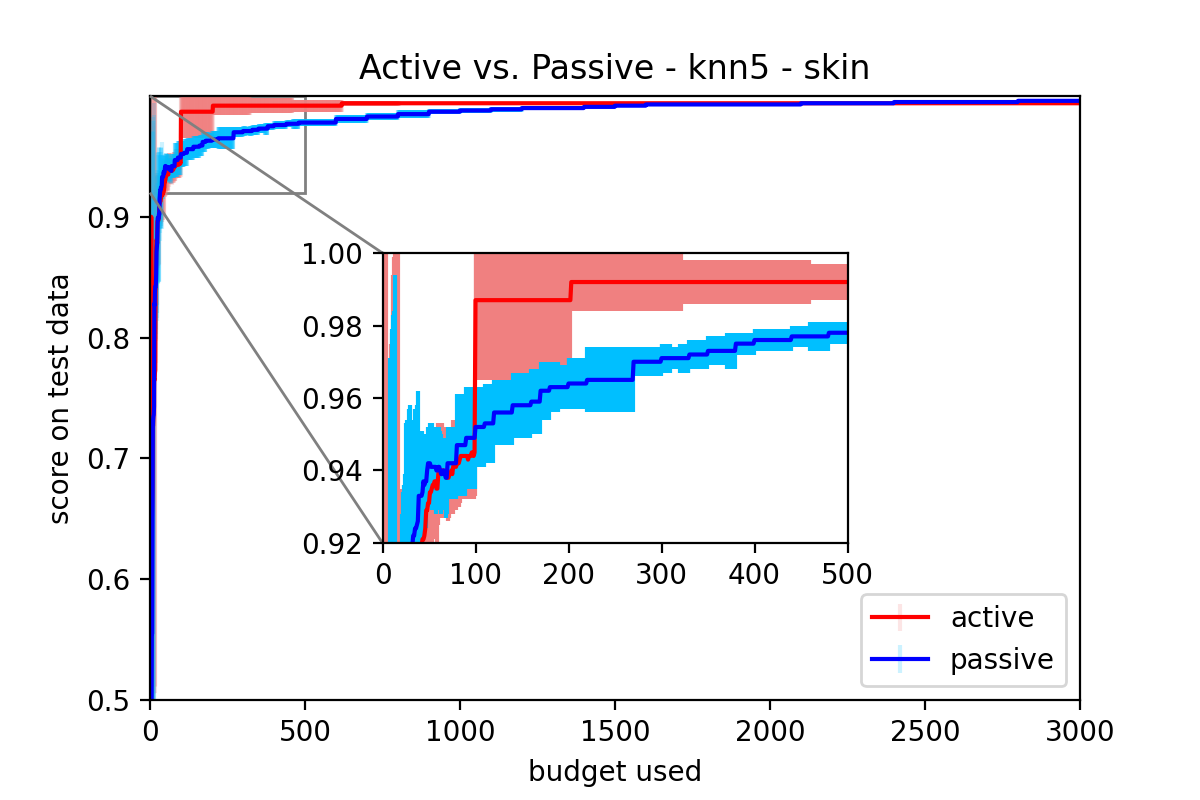}
\includegraphics[width=.4\textwidth]{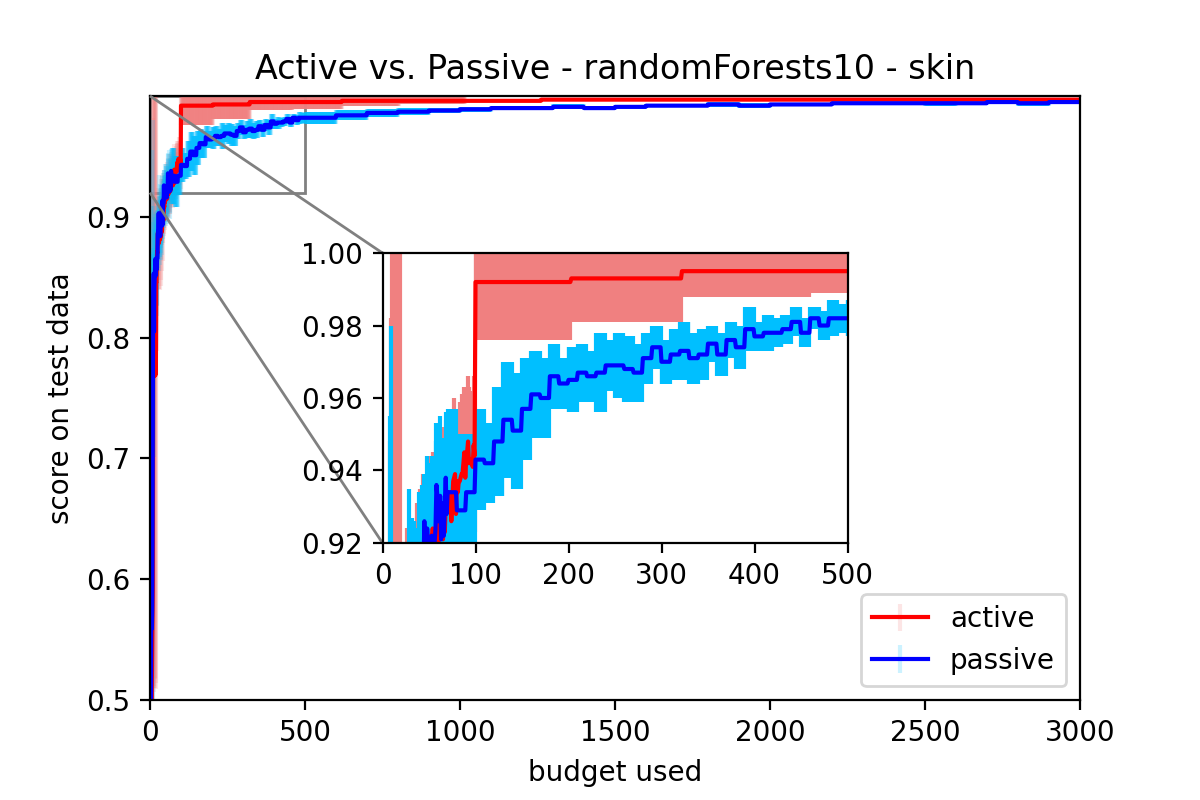}
\caption{\label{fig:skin}
Skin dataset with linear SVM, rbf SVM, kNN5, and random forests: active vs. passive learning curves.}
\end{figure}

These results indicate that for the \textit{skin} and \textit{fraud} datasets, the converged accuracy (at large budget) is superior for active learning in the case of SVM linear, but very similar for the other classifiers. This is partially due to the fact that the resulting active classifier is not linear anymore.
However, when the budget is limited to smaller values (see the inserts of Figure~\ref{fig:skin}),  the active learning procedure provides a clear advantage.

\begin{table}[htp]
\begin{center}
\begin{tabular}{|c|c|c|c|}
\hline
name & classifier & passive & active\\
\hline
skin  & SVM linear & 0.931 $\pm$ 0.004  & \textbf{0.944 $\pm$ 0.001} \\
& SVM rbf & 0.994 $\pm$ 0.000 & \textbf{0.998 $\pm$ 0.000} \\
& random forests & 0.995 $\pm$ 0.001 &  \textbf{0.997 $\pm$ 0.000} \\
& kNN ($k=5$) & \textbf{0.996 $\pm$ 0.000} & 0.994 $\pm$ 0.000 \\
\hline
fraud  & SVM linear &  0.994 $\pm$ 0.000 &  \textbf{0.999 $\pm$ 0.000} \\
& SVM rbf & 0.988 $\pm$  0.002 & \textbf{0.993 $\pm$ 0.001}  \\
& random forests & 0.991 $\pm$  0.006 & \textbf{0.998 $\pm$ 0.002} \\
 & kNN ($k=5$) & 0.946 $\pm$ 0.003 & \textbf{0.959  $\pm$ 0.002} \\
\hline
EEG & SVM linear & \textbf{0.555 $\pm$ 0.005} &  0.534 $\pm$ 0.000  \\
& SVM rbf &  0.549 $\pm$ 0.006 & \textbf{0.559 $\pm$ 0.000} \\
& random forests & 0.833 $\pm$ 0.005 & \textbf{0.877 $\pm$ 0.028} \\
 & kNN ($k=5$) & \textbf{0.763 $\pm$ 0.007} &  0.716  $\pm$ 0.009  \\
\hline
\end{tabular}
\end{center}
\caption{
\label{tab:large-non-synthetic}
Results on "large" non-synthetic datasets with several classifiers for active and passive procedures, with a budget of $N=3000$.}
\end{table}

The picture remains unchanged when we consider the "small" datasets. Indeed, most of the time the active method improves the passive one (see Table~\ref{tab:small-non-synthetic}). However, this improvement is rather limited, expect for \textit{cleveland} dataset where the use of the active algorithm is particularly beneficial.

\begin{table}[htp!]
\begin{center}
\begin{tabular}{|c|c|c|c|}
\hline
name &  classifier & passive & active\\
\hline
breast & SVM linear & 0.965  $\pm$    0.008  & \textbf{0.972  $\pm$   0.006} \\
& SVM rbf &  0.961 $\pm$   0.008 &  \textbf{0.968 $\pm$  0.011} \\
& random forests &0.968 $\pm$  0.009  &   \textbf{0.970  $\pm$  0.008}  \\
 & kNN ($k=5$) &   0.964 $\pm$  0.008 & 0.965 $\pm$ 0.011 \\
\hline
cleveland & SVM linear & \textbf{0.829 $\pm$  0.047}  & 0.821 $\pm$ 0.011  \\
& SVM rbf &   0.804 $\pm$  0.025 & \textbf{0.906 $\pm$ 0.017}  \\
& random forests &  0.778 $\pm$  0.029  & \textbf{0.879 $\pm$ 0.059}   \\
 & kNN ($k=5$) &    0.797 $\pm$ 0.038 & \textbf{0.815 $\pm$  0.014} \\
\hline
credit & SVM linear &  0.848 $\pm$ 0.023  &   0.847 $\pm$  0.020 \\
& SVM rbf &  0.862 $\pm$ 0.017  &  0.851 $\pm$   0.022  \\
& random forests & 0.845 $\pm$  0.025  & \textbf{0.853 $\pm$ 0.014}  \\
 & kNN ($k=5$) &   0.851 $\pm$  0.024  & \textbf{0.857 $\pm$  0.019} \\
\hline
\end{tabular}
\end{center}
\caption{\label{tab:small-non-synthetic}Results on three "small" non-synthetic datasets with several classifiers and a budget not to exceed 500.}
\end{table}

\subsection{Summary of the results and discussion}
The study on synthetic datasets shows that our active learning algorithm using rejection provides a clear advantage for the first two datasets, especially at low budget, but not for the third dataset.
This indicates that our algorithm is most useful in situations where the classification problem is more difficult.

In non-synthetic datasets, the active learning procedure appears to be most effective on larger datasets. The explanation is as follows. 
For small datasets (\emph{e.g.}, a few hundreds points), the number of points $N_0$ has to be chosen quite small. The estimate $\hat{\eta}_0$ is thus likely to be inaccurate, which in turn implies an inaccurate estimation of the uncertain region in the first steps and then leads to a poorly controlled algorithm. 

Interestingly, even in such small datasets, our algorithm is rarely detrimental to the final precision reached and can even be useful when the budget is extremely limited. 

\section{Conclusion and perspectives}
\label{sec:conclusion}
Recently several works have started to combine active learning and rejection arguments by abstaining to label some data within an active learning algorithm. 
This combination is very natural since active learning and rejection both focus on the most difficult data to classify. 
In this work, instead of completely abstaining to label some data, we use rejection principles in a novel way to estimate the uncertain region typically used in active learning algorithms.
We therefore propose a computationally efficient active learning algorithm that combines active learning with rejection. 
We theoretically prove the merits of our algorithm and show through several numerical experiments that it can be efficiently applied to any off-the-shelf machine learning algorithm. The benefits are more pronounced when the label budget is limited, which is promising for practical applications.

Nevertheless, in the last steps of our algorithm the uncertainty about the label of some points can become very substantial, in which case it becomes natural to completely abstain from labeling. This abstention will be included in future work combined with our use of the reject option.

\newpage
\bibliographystyle{ScandJStat}
\bibliography{activeLearningReject}

\newpage

\appendix

\section*{Appendix}

The section is devoted to the proof of  the main result.

\section{Technical result}

Let us first introduce some general notations: Let $A$ be a subset of $[0,1]^d$, and a cubic partition $\mathcal{C}_r$ as introduced in Definition \ref{def:estimator}.
For $R\in \mathcal{C}_r$,  with $R\cap A\neq \emptyset$, we introduce the regression function in $R$ as:

$$\bar{\eta}(R) = \dfrac{1}{\Pi(R)} \int_{R} \eta(z) \Pi(dz\vert A).$$ and we define $\bar{\eta}(x):=\bar{\eta}(R)$ for all $x\in R$.

Here, for each $k\geq 0$, and $r_k=N_k^{-1/(d+2)}$, we consider the estimator:
\begin{equation}
\label{eq:estimator}
\hat{\eta}_k:=\hat{\eta}_{A_k,\lfloor N_k\Pi(A_k)\rfloor,r_k},
\end{equation}
where $\hat{\eta}_{A_k,\lfloor N_k\Pi(A_k)\rfloor,r_k}$ is defined according to Definition \ref{def:estimator}, and $A_k$ is defined in algorithm \ref{alg:active learning}. importantly, defining $\hat{\eta}_k$ in this way for all $k\geq 0$ allows us to characterize the set $A_k$ in an explicit form: 
$$A_k=\bigcup_{R\,\in\,\mathcal{C}_{r_k},\;R\cap A_k\neq\emptyset} R.$$

We firstly provide a high probability bound on the estimation error:

\begin{lem}[Favorable event with high probability]~\\
\label{lem:event}
Let $L$ be defined as: 
\begin{equation}
\label{eq:last-step}
   L=\max\lbrace j\geq 1, N> \sum_{k=0}^{j}\lfloor N_k\Pi(A_k)\rfloor\rbrace.
\end{equation}
Let $k$ $\in$ $\lbrace 0,\ldots, L\rbrace$ and $E$ be the event defined by: 
\begin{equation}
E=\cap_{k=0}^L E_k,
\label{eq:event}
\end{equation}
where
\begin{equation}
\label{eq:event-k}
E_k=\left\lbrace \parallel \eta-\hat{\eta}_k\parallel_{\infty,A_k}\leq c_5 \log\left(\frac{N}{\delta}\right) N_k^{-1/(2+d)}\right\rbrace,
\end{equation}
with $\parallel\eta-\hat{\eta}_k\parallel_{\infty,A_k}:= \sup_{x\in A_{k}} |\hat{\eta}_k (x) -\eta (x) |$ and 
$c_5$ is a constant independent of $N$ and $N_k$, but dependent on $L$ and $d$. 
Under Assumptions~\ref{ass:smoothness} and \ref{ass:strong density} we have: 
$$\mathbb{P}(E)\geq 1-\delta.$$
\end{lem}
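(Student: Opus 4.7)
The plan is to show $\P(E_k^{c})\leq \delta/(L+1)$ for each $k\in\{0,\ldots,L\}$ and conclude by a union bound over $k$. I will work conditionally on the $\sigma$-field $\mathcal{F}_{k-1}$ generated by the labeled data used in steps $0,\ldots,k-1$: under this conditioning, the random set $A_k$, the edge length $r_k$, and the sample size $N_{A_k}=\lfloor N_k\Pi(A_k)\rfloor$ are all measurable, and the fresh sample $(X_i,Y_i)_{i\leq N_{A_k}}$ drawn at step $k$ is i.i.d.\ with $X_i\sim \Pi(\cdot\,|\,A_k)$ and $Y_i\mid X_i\sim \mathrm{Ber}(\eta(X_i))$. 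Since $\hat\eta_k$ is piecewise constant on the partition $\mathcal{C}_{r_k}$ and $A_k$ is a union of full cells of this partition, it suffices to control the error on each cell $R\in\mathcal{C}_{r_k}$ with $R\subseteq A_k$ separately, then union-bound over the at most $r_k^{-d}=N_k^{d/(d+2)}$ such cells.

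For $x\in R\subseteq A_k$, I use the bias--variance splitting
\[
|\hat\eta_k(x)-\eta(x)|\leq |\bar\eta(R)-\eta(x)|+|\hat\eta_k(x)-\bar\eta(R)|.
\]
By Assumption~\ref{ass:smoothness}, since $\bar\eta(R)$ is a weighted mean of $\eta$ over a cube of side $r_k$, the bias is controlled as $|\bar\eta(R)-\eta(x)|\leq s\,r_k=s\,N_k^{-1/(d+2)}$, which is within the claimed rate. For the variance, I set $W_j:=\frac{\Pi(A_k)}{\Pi(R)}Y_j\one_{\{X_j\in R\}}-\bar\eta(R)$, so that $\hat\eta_k(x)-\bar\eta(R)=N_{A_k}^{-1}\sum_{j=1}^{N_{A_k}} W_j$. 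Conditionally on $\mathcal{F}_{k-1}$ these are i.i.d., centered, bounded in absolute value by $M_R:=\Pi(A_k)/\Pi(R)$, with variance at most $M_R$. Bernstein's inequality gives
\[
\P\!\left(|\hat\eta_k(x)-\bar\eta(R)|>t\,\big|\,\mathcal{F}_{k-1}\right)\leq 2\exp\!\left(-\frac{N_{A_k}t^2}{2M_R(1+t/3)}\right).
\]

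The key computation is to plug $t=c_5\log(N/\delta)N_k^{-1/(d+2)}$. Using Assumption~\ref{ass:strong density} one has $\Pi(R)\geq \mu_{\min}r_k^d$, hence $M_R\leq \Pi(A_k)/(\mu_{\min}r_k^d)$. Combined with $N_{A_k}\asymp N_k\Pi(A_k)$ and $r_k^d=N_k^{-d/(d+2)}$, the factor $\Pi(A_k)$ cancels and the Bernstein exponent becomes at least $c\,c_5^2\mu_{\min}\log^2(N/\delta)$ for a numerical constant $c$. I then take a double union bound -- over the at most $N_k^{d/(d+2)}\leq N^{d/(d+2)}$ relevant cells and over the $L+1$ steps (noting that $L=O(\log N)$ by the geometric growth $N_k=\lfloor c_N N_{k-1}\rfloor$ and the budget constraint defining $L$). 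Choosing $c_5$ large enough in terms of $L$, $d$ and $\mu_{\min}$ so that $c\,c_5^2\mu_{\min}\log^2(N/\delta)$ dominates $\log(L/\delta)+\frac{d}{d+2}\log N$ then pushes the total failure probability below $\delta$.

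\textbf{Main obstacle.} The principal subtlety is the data-dependent, random nature of the uncertain region $A_k$. This is handled cleanly by conditioning on $\mathcal{F}_{k-1}$, which makes $A_k$ deterministic and ensures the step-$k$ sample is genuinely i.i.d.\ from $\Pi(\cdot\,|\,A_k)$. A secondary concern is that $\Pi(A_k)$ may become very small as $k$ grows, which would ordinarily cripple a concentration argument; the critical observation that rescues the proof is that $N_{A_k}$ and the per-sample variance $M_R$ both scale linearly in $\Pi(A_k)$, so their ratio is free of this quantity and the Bernstein exponent retains the right order at every step.
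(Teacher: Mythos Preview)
Your proof is correct and follows essentially the same route as the paper: bias--variance decomposition on each cell, Bernstein's inequality for the stochastic term exploiting the cancellation of $\Pi(A_k)$ between the sample size $N_{A_k}$ and the per-observation scale $M_R=\Pi(A_k)/\Pi(R)$, and a double union bound over cells and steps. The only cosmetic differences are that the paper observes $\Pi(A_k)$ (and hence $L$) is deterministic rather than conditioning on $\mathcal{F}_{k-1}$, and it bounds the number of active cells more sharply by $\Pi(A_k)r_k^{-d}/\mu_{\min}\leq N/\mu_{\min}$ instead of the crude $r_k^{-d}$; neither change affects the substance of the argument.
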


\begin{proof}~\\
Let first note that $L$ is deterministic as for all $k\geq 1$, $\Pi(A_k)=\varepsilon_k$, where $\varepsilon_k$ is stated in our algorithm. Let $k$ $\in$ $\lbrace 0,\ldots, L\rbrace$ and the corresponding estimator $\hat{\eta}_k$ (see \eqref{eq:estimator}). Let $\mathcal{C}_{r_k}$ the cubic partition considered in Definition \ref{def:estimator}, and fix $R$ $\in \mathcal{C}_{r_k}$. Let $x$ $\in$ $R$ with $R\cap A_k\neq \emptyset$.\\
Let $T_{j,k} = Y_j \one_{\{X_j \in R\}} \frac{\Pi(A_k)}{\Pi(R)}$. We observe that conditional to $A_k$,    $\mathbb{E}\left[T_{j,k}\right] = \bar{\eta}(R)$, and 
\begin{equation}    
\label{eq:proof-variable-bounds}
|T_{j,k}| \leq \frac{\Pi(A_k)}{\Pi(R)}.
\end{equation}
Furthermore
\begin{equation}
\label{eq:proof-variance-bounds}
{\rm Var}(T_{j,k}) = \dfrac{\Pi(A_k)^2}{\Pi^2(R)} {\rm Var}(Y_j \one_{\{X_j \in R\}}) \leq \dfrac{\Pi(A_k)}{\Pi^2(R)} \int_{R} \eta(z) \Pi(dz\vert A_k) \leq \frac{\Pi(A_k)}{\Pi(R)}.
\end{equation}

Hence, from Bernstein Inequality, we deduce that for $t \leq 1$,
\begin{equation*}
\mathbb{P}\left(|\hat{\eta}_k(x)-\bar{\eta}(R)| \geq t \right) \leq \exp\left(-\frac{\lfloor N_k\Pi(A_k)\rfloor t^2}{Var(T_{j,k})+\frac{t\Pi(A_k)}{3\Pi(R)}}\right)\leq \exp\left(-\lfloor N_k\Pi(A_k)\rfloor\Pi(R)t^2/\Pi(A_k)\right)
\end{equation*}
by using \eqref{eq:proof-variance-bounds}.\\
Note that for $t > 1$, the inequality is always satisfied.
Now, applying the above inequality, we deduce
\begin{equation*}
\mathbb{P}\left(|\hat{\eta}(x)-\bar{\eta}(x)| \geq t\sqrt{\frac{\Pi(A_k)}{\lfloor N_k\Pi(A_k)\rfloor\Pi(R)}} \right) \leq \exp(-t^2),
\end{equation*}
Hence choosing $t = \sqrt{\log\left(\frac{N(L+1)}{c_1\delta}\right)}$, (where $c_1$ will be defined later)
we deduce that for all $x$ $\in$ $R$, with probability at least $1-\frac{c_1\delta}{N(L+1)}$, we have
\begin{equation*}
|\hat{\eta}(x)-\bar{\eta}(x)| \leq \sqrt{\log\left(\frac{N(L+1)}{c_1\delta}\right)\frac{2}{N_k\Pi(R)}}
\end{equation*}
From the strong density assumption, we then obtain that for all $x$ $\in$ $R$, with probability at least $1-\frac{c_1\delta}{N(L+1)}$,
\begin{equation}
|\hat{\eta}(x)-\bar{\eta}(x)| \leq c_2\sqrt{\log\left(\frac{N(L+1)}{c_1\delta}\right)\frac{1}{N_kr_k^d}}.
\label{eq:variance-bound1}
\end{equation}
Where $c_2=\sqrt{\frac{2}{c_1}}$, and $c_1$ is such that $\Pi(R)\geq c_1r_k^d$ by Assumption \ref{ass:strong density}.\\
To get a result in $L_{\infty}$-norm on $A_k$, it remains to consider the union bound over all $R$ $\in$ $\mathcal{C}_{r_k}$ such that $R \cap A_k \neq \emptyset$.
$$\|\hat{\eta}-\bar{\eta}\|_{\infty,A_k} \leq \max_{R,\;R \cap A_k \neq \emptyset} \|\hat{\eta} - \bar{\eta}\|_{\infty,R}.$$

By definition, for all $k\geq 0$, the estimator $\hat{\eta}_k$ is constant on each cell $R$, in this case, we have: 

$$\Pi(A_k)=\sum_{R,\;R\cap A_k=\emptyset} \Pi(R)$$
Then, by using Assumption \ref{ass:strong density}, we have: 
$$\Pi(A_k)\geq \vert \lbrace R, \;R\cap A_k\neq \emptyset\rbrace\vert\ c_1 r_k^d.$$
As $r_k=N_k^{-1/(d+2)}$, we get for all $k$ $\in$ $\lbrace 0, \ldots, L\rbrace$,   
 
\begin{equation}
\vert \lbrace R,\; R\cap A_k\neq \emptyset\rbrace\vert \leq \dfrac{1}{c_1}\Pi(A_k)N_k^{d/(d+2)}\leq\dfrac{1}{c_1}( \Pi(A_k)N_k)\leq \frac{N}{c_1}
\label{eq:card-cells}
\end{equation}
Thus we have (conditional on $A_k$): 

\begin{align*}
    \mathbb{P}\left(\forall\,x\in A_k, |\hat{\eta}(x)-\bar{\eta}(x)| > c_2\sqrt{\frac{\log\left(\frac{N(L+1)}{c_1\delta}\right)}{N_kr_k^d}}\right) &\leq \mathbb{P}\left(\max_{R,\;R \cap A_k \neq \emptyset} \|\hat{\eta} - \bar{\eta}\|_{\infty,R}>c_2\sqrt{\frac{\log\left(\frac{N(L+1)}{c_1\delta}\right)}{N_kr_k^d}}\right)\\
    & \leq\sum_{R,\;R\cap A_k=\emptyset} \mathbb{P}\left(\|\hat{\eta} - \bar{\eta}\|_{\infty,R}>c_2\sqrt{\frac{\log\left(\frac{N(L+1)}{c_1\delta}\right)}{N_kr_k^d}}\right)\\
    & \leq\vert \lbrace R,\; R\cap A_k\neq \emptyset\rbrace\vert \frac{c_1\delta}{N(L+1)}\\
    &\leq \frac{\delta}{L+1}\quad\text{by \eqref{eq:card-cells}}
\end{align*}
Besides, Assumption \ref{ass:smoothness} leads to
\begin{equation}
\|\eta-\bar{\eta}\|_{\infty, A_k}\leq c_3 r_k,
\label{eq:bias-bounds1}
\end{equation}
where $c_3$ depends on $s$ (from Assumption \ref{ass:smoothness}) and $d$.
Thus, by combining \eqref{eq:variance-bound1}, \eqref{eq:bias-bounds1} and \eqref{eq:card-cells}, we can obtain that with probability at least $1-\frac{\delta}{L+1}$,
\begin{equation*}
\|\hat{\eta}_k-\eta\|_{\infty,A_k} \leq c_4\left( \sqrt{\log\left(\frac{N(L+1)}{c_1\delta}\right)\frac{1}{N_k r_k^d}}+r_k\right),
\end{equation*}
where $c_4=\max(c_2,c_3)$.\\
Finally, as $r_k = N_k^{-1/2+d}$, by considering the union bound over all steps, we get with probability at least $1-\delta$, 
\begin{equation}
\label{eq:union-bound-final}
\|\hat{\eta}_k-\eta\|_{\infty,A_k} \leq c_5 \log\left(\frac{N}{\delta}\right) N_k^{-1/(2+d)}\quad \text{for all}\;k\in\lbrace0,\ldots,L\rbrace
\end{equation}
where $c_5$ depends on $c_4, c_1$ and $L$.
% remarquons que L\leq N.
\end{proof}

Because the constant $c_5$ in \eqref{eq:event-k} depends on $L$, we provide below a result which states that the variable $L$ defined in \eqref{eq:last-step} does not affect drastically the bounds in \eqref{eq:event-k}.

\begin{lem}[Bounds on the maximum number of steps $L$]~\\
Let us consider the variable $L$ defined in \eqref{eq:last-step}, we have:  
$$
\log_2\left(c_8\left(\frac{1}{\log\left(\frac{N}{\delta}\right)}\right)^{\frac{d+2}{1+d}}N^{\frac{d+3}{2+2d}}\right) \leq L 
$$
and
$$
L \leq \min\left(1+\log_2\left(\left(\frac{1}{c_6\log\left(\frac{N}{\delta}\right)}\right)^{(2+d)/(1+d)}N^{(3+d)/(2+2d)}\right), \log_2\left(\sqrt{N}\right)\right),
$$
where $c_8$, $c_6$ are the constants respectively defined in \eqref{eq:const8}, and \eqref{eq:const6}.
\end{lem}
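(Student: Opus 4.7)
The proof rests on an explicit asymptotic for the per-step budget $N_k \Pi(A_k)$, followed by a geometric-sum analysis. I would first use the construction of the algorithm to observe that $\Pi(A_k) = \varepsilon_k$ for every $k$, and plug in the explicit choice $\varepsilon_k = \min\bigl(1, \log(N/\delta)\log(N)\, N_{k-1}^{-1/(d+2)}\bigr)$ from Theorem \ref{theo-rate}. Then unrolling the recursion $N_k = \lfloor c_N N_{k-1}\rfloor$ gives two-sided bounds of the form $a_1 c_N^k \sqrt{N} \leq N_k \leq c_N^k \sqrt{N}$, absorbing the floor into multiplicative constants depending only on $c_N$. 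Finitely many initial indices where $\varepsilon_k$ saturates at $1$ contribute only a lower-order additive term and can be swallowed into the constants $c_6, c_8$.

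The second step is to observe that, after substitution,
$$
N_k \varepsilon_k \;\asymp\; \log(N/\delta)\,\log(N)\; c_N^{\,k(d+1)/(d+2)}\; N^{(d+1)/(2(d+2))},
$$
so $(N_k\varepsilon_k)_k$ is geometric in $k$ with ratio $\rho := c_N^{(d+1)/(d+2)} > 1$. Consequently $\sum_{k=0}^j N_k \varepsilon_k$ is, up to multiplicative constants depending only on $c_N$ and $d$, equivalent to its final term.

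For the two bounds on $L$, I would use the defining property $\sum_{k=0}^L \lfloor N_k \varepsilon_k\rfloor < N \leq \sum_{k=0}^{L+1}\lfloor N_k \varepsilon_k\rfloor$. Inserting the geometric-sum estimate and isolating $\rho^L$ gives
$$
\rho^{L} \;\asymp\; \frac{N^{(d+3)/(2(d+2))}}{\log(N/\delta)\,\log(N)}
$$
up to constants, and taking logarithms produces the upper bound (via the $<N$ inequality, with a $+1$ coming from the $L$-to-$L+1$ shift) and the lower bound (via the $\geq N$ inequality), both in the exact form announced. The factor $\tfrac{d+2}{d+1}$ that appears when solving for $L$ explains the exponents $(2+d)/(1+d)$ in the $\log$-argument; the change of base from $c_N$ to $2$ is absorbed into $c_6$ and $c_8$. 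The extra bound $L \leq \log_2(\sqrt{N})$ follows from a crude, separate argument: $N_0 = \lfloor\sqrt{N}\rfloor$ samples are drawn at step $0$ because $\Pi(A_0)=1$, the $N_k$ grow geometrically, and the total cannot exceed $N$; this immediately caps the number of steps by $\log_2(\sqrt{N})$.

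The only real technical nuisance is tracking the two layers of floor functions — the one in $N_k = \lfloor c_N N_{k-1}\rfloor$ and the one in $\lfloor N_k \varepsilon_k\rfloor$ — together with the finite initial regime in which the $\min$ defining $\varepsilon_k$ is saturated at $1$. Neither changes the leading asymptotic, but both contribute to the constants $c_6$ and $c_8$ in the statement; the rest is a routine geometric-series calculation.
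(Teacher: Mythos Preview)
Your approach to the lower bound is essentially the paper's: use $N \le \sum_{k=0}^{L+1} N_k\Pi(A_k)$, exploit the geometric growth of $N_k\varepsilon_k$ so the sum is comparable to its last term, extract a lower bound on $N_L$, and convert via $N_L \approx c_N^L\sqrt{N}$. The paper in fact borrows the bound on $N_L$ directly from the proof of Lemma~\ref{lem:excess-error} and then writes $N_L = 2^L N_0$ (implicitly taking $c_N=2$), but the substance is the same.

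For the upper bound the paper takes a shorter route than yours. Instead of estimating the whole sum $\sum_{k\le L}N_k\varepsilon_k$, it uses only the single-term consequence $N_L\,\varepsilon_L \le N$ of the definition of $L$, and then performs a case split on which branch of the $\min$ defining $\varepsilon_L$ is active. If $\varepsilon_L = 1$ one gets $N_L\le N$, hence $L\le \log_2\sqrt{N}$; if $\varepsilon_L<1$ one substitutes $\varepsilon_L = c_6\log(N/\delta)\,N_{L-1}^{-1/(d+2)}$ and solves, obtaining the other bound. So in the paper the two terms inside the $\min$ arise from two mutually exclusive cases, not as two independent universal bounds.

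This is exactly where your proposal has a gap. Your ``crude separate argument'' for $L\le \log_2\sqrt{N}$ asserts that ``the $N_k$ grow geometrically, and the total cannot exceed $N$''. But the total labels spent is $\sum_k \lfloor N_k\varepsilon_k\rfloor$, not $\sum_k N_k$; since $\varepsilon_k$ can be strictly smaller than $1$, geometric growth of $N_k$ alone does not force $L\le \log_2\sqrt{N}$. The paper only obtains $L\le \log_2\sqrt{N}$ under the hypothesis $\varepsilon_L=1$, i.e.\ precisely when the last step still draws $N_L$ labels. Your main geometric-sum argument already delivers the first term of the upper bound; to recover the $\log_2\sqrt{N}$ part you need the case split on $\varepsilon_L$, not a separate universal estimate.
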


\begin{proof}~\\
By definition of $L$, we have 
$$N\leq \sum_{i=0}^{L+1}N_i\Pi(A_i)$$

and we have as in the proof of Lemma \ref{lem:excess-error}
$$N_{L}\geq c_8\left(\frac{1}{\log\left(\frac{N}{\delta}\right)}\right)^{(d+2)/(1+d)}N^{(d+2)/(1+d)}.$$

Besides, as $N_L=2^LN_0$ and $N_0=\sqrt{N}$, we obtain the first inequality

\begin{equation}
\label{eq:depth-inf}
L\geq \log_2\left(c_8\left(\frac{1}{\log\left(\frac{N}{\delta}\right)}\right)^{(d+2)/(1+d)}N^{(d+3)/(2+2d)}\right)
\end{equation}
We can get the second inequality by starting with \eqref{eq:last-step}, that is:
$$
N_L\Pi(A_L)\leq N.
$$

Furthermore, as $\Pi(A_L)=\varepsilon_L=\min\left(1,c_6\log\left(\frac{N}{\delta}\right) N_{L-1}^{-1/(2+d)}\right)$ (see \eqref{eq:rejecteq}), we get 
$$N_L\min\left(1,c_6\log\left(\frac{N}{\delta}\right) N_{L-1}^{-1/(2+d)}\right)\leq N.$$
If $1\leq c_6\log\left(\frac{N}{\delta}\right) N_{L-1}^{-1/(2+d)}$, then

\begin{equation}
\label{eq:depth-sup1}
    L\leq \log_2\left(\sqrt{N}\right)
\end{equation}
On the other hand, if $1> c_6\log\left(\frac{N}{\delta}\right) N_{L-1}^{-1/(2+d)}$ then 

\begin{equation}
\label{eq:depth-sup2}
L\leq 1+\log_2\left(\left(\frac{1}{c_6\log\left(\frac{N}{\delta}\right)}\right)^{(2+d)/(1+d)}N^{(3+d)/(2+2d)}\right)\end{equation}
Finally, by combining  \eqref{eq:depth-inf}, \eqref{eq:depth-sup1}, and \eqref{eq:depth-sup2}, we get the second inequality. 
 
\end{proof}

\section{Proof of Theorem \ref{theo-rate}}
We firstly prove that in the event $E$, the classifier $g_{\hat{\eta}_k}$ does not make any error of classification in the set $A_k\setminus A_{k+1}$ for all $k=0,\ldots,L-1$, where $L$ is defined by \eqref{eq:last-step}.
\begin{lem}[Correct classification]~\\
\label{lem:correct-class}
Let $E$ be the event defined by \eqref{eq:event}. Under Assumption~\ref{ass:boundedness}, the Bayes classifier $g^*$ agrees with $g_{\hat{\eta}_k}$ on the set $A_{k}\setminus A_{k+1}$ for $k\in\lbrace 0,\ldots,L-1\rbrace$, where $L$ is defined by \eqref{eq:last-step}, and $\hat{\eta}_k$ by \eqref{eq:estimator}.  
\end{lem}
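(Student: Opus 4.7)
The plan is to show that, on the favorable event $E$ of Lemma~\ref{lem:event}, every point of $A_k\setminus A_{k+1}$ is classified by $\hat{\eta}_k$ with a margin large enough to absorb the estimation error, so that $\hat{\eta}_k(x)-\tfrac{1}{2}$ and $\eta(x)-\tfrac{1}{2}$ must share the same sign.

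First I would unfold the definitions. A point $x\in A_k\setminus A_{k+1}$ satisfies $\hat{f}_k(x)>\lambda_{k+1}$ by construction of $A_{k+1}$, which rewrites as $|\hat{\eta}_k(x)-\tfrac{1}{2}|>\lambda_{k+1}-\tfrac{1}{2}$. On $E$, Lemma~\ref{lem:event} provides $\|\hat{\eta}_k-\eta\|_{\infty,A_k}\leq \tau_k$ with $\tau_k:=c_5\log(N/\delta)N_k^{-1/(2+d)}$. The standard margin argument, based on the triangle inequality applied to $\eta(x)-\tfrac{1}{2}=(\eta(x)-\hat{\eta}_k(x))+(\hat{\eta}_k(x)-\tfrac{1}{2})$, shows that $g_{\hat{\eta}_k}(x)=g^*(x)$ as soon as $|\hat{\eta}_k(x)-\tfrac{1}{2}|>\tau_k$. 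The lemma therefore reduces to the quantile lower bound
\[
\lambda_{k+1}\geq \tfrac{1}{2}+\tau_k.
\]

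To prove this lower bound I would exploit the defining property of $\lambda_{k+1}$ as the largest threshold satisfying $\Pi(\hat{f}_k\leq t\mid A_k)\leq \varepsilon_{k+1}$: it is enough to verify that this inequality already holds at $t=\tfrac{1}{2}+\tau_k$. The uniform bound on $A_k$ yields $|\hat{f}_k-f|\leq \tau_k$ on $A_k$ (reverse triangle inequality applied to $f=\tfrac{1}{2}+|\eta-\tfrac{1}{2}|$ and its hat-version), whence
\[
\{\hat{f}_k\leq \tfrac{1}{2}+\tau_k\}\cap A_k \subseteq \{f\leq \tfrac{1}{2}+2\tau_k\}\cap A_k.
\]
Assumption~\ref{ass:boundedness} then gives $\Pi(f\leq \tfrac{1}{2}+2\tau_k)\leq 2C\tau_k$, so that $\Pi(\hat{f}_k\leq \tfrac{1}{2}+\tau_k\mid A_k)\leq 2C\tau_k/\Pi(A_k)$. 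Comparing this upper bound with $\varepsilon_{k+1}=c_5^{-1}\log(N)\,\tau_k$ closes the argument; the extra $\log N$ factor deliberately built into the rejection rate of Theorem~\ref{theo-rate} is precisely the slack needed for the inequality to go through.

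The main obstacle will be to handle the denominator $\Pi(A_k)=\prod_{j=1}^{k}\varepsilon_j$, which decays multiplicatively with $k$ and in principle may become very small. I plan to address this by an induction on $k\in\{0,\dots,L-1\}$, using the explicit upper bound on the number of steps $L$ proved in the companion lemma to ensure that $\Pi(A_k)$ stays bounded below by a suitable polynomial rate in $N$, while the ratio $2C\tau_k/\varepsilon_{k+1}=2Cc_5/\log N$ tends to zero. This is the place where the careful calibration of the sequence $(\varepsilon_k)_{k\geq 0}$ of rejection rates is genuinely used, and once the quantile lower bound $\lambda_{k+1}\geq \tfrac{1}{2}+\tau_k$ is secured on $E$, the margin argument from the second paragraph immediately delivers $g_{\hat{\eta}_k}(x)=g^*(x)$ on $A_k\setminus A_{k+1}$.
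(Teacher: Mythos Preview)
Your skeleton is exactly the paper's: reduce the claim to the quantile lower bound $\lambda_{k+1}\geq \tfrac12+\tau_k$, obtain that bound by controlling the mass of $\{\hat f_k\le \tfrac12+\tau_k\}$ via Assumption~\ref{ass:boundedness}, and finish with the margin/triangle argument. The divergence is entirely in your last paragraph.

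The paper's proof never picks up the factor $1/\Pi(A_k)$. Throughout the appendix the identity $\Pi(A_k)=\varepsilon_k$ is taken as given (it is stated at the opening of the proof of Lemma~\ref{lem:event} and reused in the excess-risk lemma), so the rejection rate $\varepsilon_{k+1}$ is calibrated to the \emph{unconditional} mass $\Pi(A_{k+1})$, not to $\Pi(A_{k+1}\mid A_k)$. With that reading the quantity to compare with $\varepsilon_{k+1}$ is simply
\[
\Pi\bigl(\hat f_k(X)\le \tfrac12+\tau_k,\ X\in A_k\bigr)\ \le\ \Pi\bigl(f(X)\le \tfrac12+2\tau_k\bigr)\ \le\ 2C\tau_k,
\]
and the single $\log N$ slack built into $\varepsilon_{k+1}$ suffices for $3C\tau_k\le \varepsilon_{k+1}$ (the paper gets $3C$ from the decomposition $F_{\hat f_k}\le |F_{\hat f_k}-F_f|+F_f$; your $2C$ is equally fine). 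Hence $\lambda_{k+1}\ge \tfrac12+\tau_k$ and the margin argument closes.

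Under the literal conditional reading you adopted, $\Pi(A_k)=\prod_{j\le k}\varepsilon_j$, and your proposed fix cannot work. After the index $j^*$ where the $\varepsilon_j$ first drop below $1$, they form a geometric sequence $\varepsilon_{j^*+m}=c_N^{-m/(2+d)}$, so $\Pi(A_{j^*+m})=c_N^{-c\,m(m+1)}$ for some $c>0$. Since the companion lemma gives $L-j^*\asymp \log N$, one gets $\Pi(A_{L-1})$ of order $N^{-c'\log N}$, which is far below any $1/\log N$ threshold; no polynomial-in-$N$ lower bound on $\Pi(A_k)$ is available and the induction cannot close.

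In short: drop the last paragraph and adopt the paper's calibration $\Pi(A_k)=\varepsilon_k$ (equivalently, compare the unconditional sub-level mass with $\varepsilon_{k+1}$). Your first three paragraphs then already constitute the complete argument, matching the paper's proof.
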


\begin{proof}~\\
Let us start by stating general facts that hold for a generic estimator $\hat{\eta}$ and the corresponding score function $\hat{f}(x) = \max(\hat{\eta}(x), 1-\hat{\eta}(x))$.
We consider $F_{f}$, and $F_{\hat{f}}$ the cumulative distribution of $f(X)$ and $\hat{f}(X)$, where $f(x) = \max(\eta(x), 1-\eta(x))$.
Let $t \in (1/2,1)$, we have that conditional on the data
\begin{equation*}
F_{\hat{f}}(t) \leq \left|F_{\hat{f}}(t)-F_{f}(t)\right| + F_{f}(t).    
\end{equation*}
Besides, the following relation holds:
\begin{equation*}
\left|F_{\hat{f}}(t)-F_{f}(t)\right|  \leq \mathbb{E}_X\left[\one_{\{\|\hat{f}-f\|_{\infty} \geq |f(X)-t|\}}\right] \leq 2C\|\hat{f}-f\|_{\infty},
\end{equation*}
where $C$ is the bound on the density $f$ provided in Assumption~\ref{ass:boundedness}. Using again 
Assumption~\ref{ass:boundedness} we can write
\begin{equation*}
F_{f}(t) \leq C\left(t-\frac{1}{2}\right).
\end{equation*}
We then deduce that for all $t \in (1/2,1)$, conditional on the data
\begin{eqnarray}
\label{eq:borneFhatf}
F_{\hat{f}}(t)  \leq  2C\|\hat{f}-f\|_{\infty} + C\left(t-\frac{1}{2}\right) 
            \leq 
            2C\|\hat{\eta}-\eta\|_{\infty} + C\left(t-\frac{1}{2}\right).
\end{eqnarray}
Given iteration $k\in \{ 0,\ldots, L-1\}$, we set $\hat{t}_k = \|\hat{\eta}_k-\eta\|_{\infty,A_{k}}$, and $t_k = \frac{1}{2}+ \hat{t}_k$. Thanks to~\eqref{eq:borneFhatf}, with $\hat{\eta} =\hat{\eta}_k $ and $t = t_k$, we deduce that (conditional on $A_k$)
\begin{equation*}
F_{\hat{f}_k}(t_k) \leq 3C\hat{t}_k.
\end{equation*}
Then, in the event $E$, we have that 

\begin{equation}
\label{eq:const6}
F_{\hat{f}_k}(t_k)\leq  c_6\log\left(\frac{N}{\delta}\right) N_k^{-1/(2+d)},
\end{equation}
where $c_6=3c_5C$, and $c_5$ is defined in \eqref{eq:union-bound-final}. Hence, 
\begin{equation}
\label{eq:rejecteq}
F_{\hat{f}_k}(t_k)\leq \min\left(1,c_6\log\left(\frac{N}{\delta}\right) N_k^{-1/(2+d)}\right) \leq \varepsilon_{k+1}
\end{equation}
This implies that $\lambda_{k+1} \geq \frac{1}{2} + \hat{t}_k$ by the definition of of $\lambda_{k+1}$.

Let $x$ $\in$ $A_k\setminus A_{k+1} = \{x \in A_k, \;\; \hat{f}_k(x) > \lambda_{k+1}\}$. Necessarily,  we have $$\hat{f}_k(x)-\frac{1}{2}>\|\hat{\eta}_k-\eta\|_{\infty,A_{k}}\geq \vert\hat{\eta}_k(x)-\eta(x)\vert$$
which implies $g_{\eta}(x)=g_{\hat{\eta}_k}(x).$
\end{proof}

\begin{lem}[Excess-error]\label{lem:excess-error}~\\
Let $g_{\hat{\eta}}$ be the classifier provided by our algorithm, on the event $E$, we have 
$$R(g_{\hat{\eta}})-R(g_{\eta})\leq \tilde{O}\left(N^{-\frac{2}{d+1}}\right),$$
where $\tilde{O}$ hides some constants and logarithmic factors.
\end{lem}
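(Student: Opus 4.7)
The plan is to use the excess-risk identity \eqref{eq:excess-risk} combined with the partition of $\mathcal{X}$ induced by the iterated uncertain regions. Writing $[0,1]^d$ as the disjoint union
\[
[0,1]^d \;=\; \bigsqcup_{j=0}^{L-1}(A_j\setminus A_{j+1})\;\sqcup\; A_L
\]
and using the explicit form
\[
\hat{\eta} \;=\; \sum_{j=0}^{L-1}\hat{\eta}_j\,\one_{A_j\setminus A_{j+1}} \;+\; \hat{\eta}_L\,\one_{A_L},
\]
I would split the excess risk as a sum of integrals over these $L+1$ layers. On the event $E$ defined in \eqref{eq:event}, Lemma~\ref{lem:correct-class} immediately kills the first $L$ terms: for $j\in\{0,\ldots,L-1\}$, $g_{\hat{\eta}_j}=g^\ast$ on $A_j\setminus A_{j+1}$, so the indicator $\one_{\{g_{\hat{\eta}}\neq g^\ast\}}$ vanishes there.

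It then suffices to control the single remaining layer $A_L$. On the event where $g_{\hat{\eta}_L}(x)\neq g^\ast(x)$, the standard plug-in argument gives $|\eta(x)-\tfrac{1}{2}| \leq |\hat{\eta}_L(x)-\eta(x)| \leq \|\hat{\eta}_L-\eta\|_{\infty,A_L}$. Plugging this into \eqref{eq:excess-risk}, combined with Lemma~\ref{lem:event} (on the event $E$) and with $\Pi(A_L)=\varepsilon_L$, yields
\[
R(g_{\hat{\eta}})-R(g^\ast) \;\leq\; 2\,\varepsilon_L \cdot c_5\log\!\left(\tfrac{N}{\delta}\right) N_L^{-1/(2+d)}.
\]

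The remaining step, which is the main technical work, is to translate $N_L$ into a bound in terms of the total budget $N$. By the definition of $L$ in \eqref{eq:last-step},
\[
N \;\leq\; \sum_{k=0}^{L+1}\lfloor N_k\,\Pi(A_k)\rfloor \;\leq\; N_0 + \sum_{k=1}^{L+1} N_k\,\varepsilon_k.
\]
Substituting $\varepsilon_k \leq \log(N/\delta)\log(N)\, N_{k-1}^{-1/(2+d)}$ and using $N_k = \lfloor c_N N_{k-1}\rfloor$, each term is of order $\log(N/\delta)\log(N)\,N_k^{(d+1)/(d+2)}$; geometric growth of $(N_k)$ makes the last term dominate, giving
\[
N_L \;\geq\; c\,\Bigl(\tfrac{N}{\log(N/\delta)\log N}\Bigr)^{(d+2)/(d+1)}.
\]
Combining $\varepsilon_L \lesssim \log(N/\delta)\log(N) N_{L-1}^{-1/(d+2)} \asymp \log(N/\delta)\log(N) N_L^{-1/(d+2)}$ with the previous display delivers
\[
R(g_{\hat{\eta}})-R(g^\ast) \;\lesssim\; \bigl(\log(N/\delta)\log N\bigr)^{\kappa}\, N_L^{-2/(d+2)} \;\leq\; \widetilde{O}\bigl(N^{-2/(d+1)}\bigr),
\]
as claimed. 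The main obstacle is this last budget-accounting step: one must confirm that the geometric blow-up of $(N_k)$ together with the prescribed decay of $(\varepsilon_k)$ indeed leaves the last layer $L$ dominant in the budget sum, and that the logarithmic dependence on $L$ inside $c_5$ is benign (handled by the two-sided bound on $L$ proved just before, which is logarithmic in $N$).
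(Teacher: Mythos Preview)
Your proof is correct and follows the same route as the paper: decompose the excess risk over the layers $A_j\setminus A_{j+1}$ and $A_L$, kill the first $L$ terms via Lemma~\ref{lem:correct-class}, bound the surviving $A_L$-term, and then do the budget accounting to translate $N_L$ into $N$ via the geometric growth of $(N_k)$. The only minor variation is in the $A_L$-bound: you use $\Pi(A_L)=\varepsilon_L$ to obtain $2\varepsilon_L\,\|\hat{\eta}_L-\eta\|_{\infty,A_L}$, whereas the paper invokes the margin consequence of Assumption~\ref{ass:boundedness} to get $4C\,\|\hat{\eta}_L-\eta\|_{\infty,A_L}^{2}$ directly; since both $\varepsilon_L$ and $\|\hat{\eta}_L-\eta\|_{\infty,A_L}$ are of order $N_L^{-1/(2+d)}$ up to logarithms on the event $E$, the two bounds coincide and the remainder of the argument is identical.
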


\begin{proof}
Let us consider the sequence $(A_k)_{0\leq k\leq L}$ used in our algorithm.  It is not difficult to see that $\{A_k\setminus A_{k+1}, k=0, \ldots, L-1\}\cup A_L$ forms a partition of $[0,1]^d$, where $L$ is defined by \eqref{eq:last-step}.

In this case, the excess-risk of $g_{\hat{\eta}}$ can be rewritten  as: 

$$R(g_{\hat{\eta}})-R(g^*)=\sum_{j=0}^{L-1}\int_{\{g_{\hat{\eta}}\neq g^*\}\cap \{A_j\setminus A_{j+1}\}}\vert 2\eta(x)-1\vert d\Pi(x)+ \int_{A_L\cap \{g_{\hat{\eta}}\neq g^*\}} \vert 2\eta(x)-1\vert d\Pi(x) $$

and thus
\begin{equation}
\label{eq:excess-decomposition}
R(g_{\hat{\eta}})-R(g^*)=2 \sum_{j=1}^{L-1} \E_X\left[|\eta(X)-\frac{1}{2}| \one_{\{g^*(X)\neq g_{\hat{\eta}_j}(X)\}} \one_{\{A_j \setminus A_{j+1}\}} \right] +2 \E_X\left[|\eta(X)-\frac{1}{2}| \one_{\{g^*(X)\neq g_{\hat{\eta}_L}(X)\}} \one_{\{A_L\}}\right] 
\end{equation}
Due to the Lemma \ref{lem:correct-class}, the first term in the r.h.s of \eqref{eq:excess-decomposition} is zero in the event $E$.
Thus we get 
\begin{align*}
R(\hat g)-R(g^*)& =2 \E_X\left[|\eta(X)-\frac{1}{2}| \one_{\{g^*(X)\neq g_{\hat{\eta}_L}(X)\}} \one_{\{A_L\}}\right]\\
                &\le 2 \E_X\left[|\eta(X)-\frac{1}{2}| \one_{|\hat{\eta}(X)-\frac{1}{2}|< |\hat{\eta}_L(X)-\eta(X)|} \one_{\{A_L\}}\right]  
\end{align*}
We thus have 
\begin{align}
R(\hat g)-R(g^*) &\le 2 \|\hat{\eta}_L-\eta\|_{\infty,A_L}.
\E_X\left[\one_{|\hat{\eta}(X)-\frac{1}{2}|< |\hat{\eta}_L(X)-\eta(X)|}\right]\nonumber\\
                 &\le 4C\|\hat{\eta}_L-\eta\|_{\infty,A_L}^{2}\label{eq:pre-rate}\;\text{by Assumption \ref{ass:boundedness}}.
\end{align}
By Lemma \ref{lem:event}, we get with probability at least $1-\delta$
\begin{equation}
\label{eq:final-risk}
R(\hat g)-R(g^*) \le 4Cc_6 \log^2\left(\frac{N}{\delta}\right) N_L^{-2/(2+d)}.
\end{equation}

Besides, because of the geometric progression of $N_{j}$, and the definition of $L$,  we have
\begin{align*}
N &\leq\sum_{j=0}^{L+1} N_{j}\Pi(A_j)\\
  &= \sum_{j=0}^{L+1} N_{j}\varepsilon_{j}\\
  &\le N_0+c_6\log\left(\frac{N}{\delta}\right)\sum_{j=1}^{L+1} N_j N_{j-1}^{-1/(2+d)}\\
  &=N_0+2c_6\log\left(\frac{N}{\delta}\right)\sum_{j=1}^{L+1} N_{j-1}^{(d+1)/(2+d)}\\
  &\leq N_0+ c_7\log\left(\frac{N}{\delta}\right) N_{L+1}^{(d+1)/(2+d)}\quad \text{for some constant c7}.
\end{align*} 
Thus we get 
\begin{align}
    N-N_0\leq c_7\log\left(\frac{N}{\delta}\right) N_{L+1}^{(d+1)/(2+d)}&\Longrightarrow \dfrac{1}{4}N\leq c_7\log\left(\frac{N}{\delta}\right) N_{L+1}^{(d+1)/(2+d)} \quad\text{as}\; N_0=\sqrt{N}\leq \dfrac{3}{4}N \nonumber\\
                        &\Longrightarrow N_{L}\geq c_8\left(\frac{1}{\log\left(\frac{N}{\delta}\right)}\right)^{(d+2)/(1+d)}N^{(d+2)/(1+d)}\label{eq:lowerboundsNL},
\end{align}
where 
\begin{equation}
\label{eq:const8}
c_8=\dfrac{1}{2} \left(\dfrac{1}{4c_7}\right)^{(d+2)/(1+d)}.
\end{equation}
Thus, \eqref{eq:final-risk} becomes 

$$R(g_{\hat{\eta}})-R(g_{\eta})\leq \tilde{O}\left(N^{-\frac{2}{d+1}}\right).$$

\end{proof}

\end{document}